\newcommand{\gr}{\mathrm{gr}}
\newcommand{\Col}{\operatorname{Col}}
\newcommand{\Ord}{{\mathrm{Ord}}}
\newcommand{\ZFC}{{\sf ZFC}}
\newcommand{\forces}[2]{\Vdash^{#1}_{#2}}
\newcommand{\BS}{\omega^\omega}
\newcommand{\lpm}{\mathcal{L}_{\mathrm{pm}}} 
\newcommand{\PI}{\boldsymbol\Pi}
\newcommand{\SIGMA}{\boldsymbol\Sigma}
\newcommand{\DELTA}{\boldsymbol\Delta}
\newtheorem{theorem}{Theorem}[section]
\newtheorem{lemma}[theorem]{Lemma}
\newtheorem{proposition}[theorem]{Proposition}
\newtheorem{question}[theorem]{Question}
\newcounter{cl}[theorem]
\newtheorem{claim}[cl]{Claim}
\newtheorem*{claim*}{Claim}
\newtheorem*{subclaim*}{Subclaim}
\theoremstyle{definition}
\newtheorem{definition}[theorem]{Definition}
\newtheorem{example}[theorem]{Example}
\theoremstyle{remark}
\newtheorem{remark}[theorem]{Remark}
\newtheorem{case}{Case}
\newtheorem*{case*}{Case}
\newenvironment{enumerate-(a)}{\begin{enumerate}[label={\upshape (\alph*)}, leftmargin=2pc]}{\end{enumerate}}
\newenvironment{enumerate-(a)-r}{\begin{enumerate}[label={\upshape (\alph*)}, leftmargin=2pc,resume]}{\end{enumerate}}
\newenvironment{enumerate-(A)}{\begin{enumerate}[label={\upshape (\Alph*)}, leftmargin=2pc]}{\end{enumerate}}
\newenvironment{enumerate-(A)-r}{\begin{enumerate}[label={\upshape (\Alph*)}, leftmargin=2pc,resume]}{\end{enumerate}}
\newenvironment{enumerate-(i)}{\begin{enumerate}[label={\upshape (\roman*)}, leftmargin=2pc]}{\end{enumerate}}
\newenvironment{enumerate-(i)-r}{\begin{enumerate}[label={\upshape (\roman*)}, leftmargin=2pc,resume]}{\end{enumerate}}
\newenvironment{enumerate-(I)}{\begin{enumerate}[label={\upshape (\Roman*)}, leftmargin=2pc]}{\end{enumerate}}
\newenvironment{enumerate-(I)-r}{\begin{enumerate}[label={\upshape (\Roman*)}, leftmargin=2pc,resume]}{\end{enumerate}}
\newenvironment{enumerate-(1)}{\begin{enumerate}[label={\upshape (\arabic*)}, leftmargin=2pc]}{\end{enumerate}}
\newenvironment{enumerate-(1)-r}{\begin{enumerate}[label={\upshape (\arabic*)}, leftmargin=2pc,resume]}{\end{enumerate}}
\begin{document}


\subjclass[2010]{03E60, 03E45, 03E15, 03E30, 03E55} 

\keywords{Infinite Game, Determinacy, Inner Model Theory, Large Cardinal, Long Game, Mouse}

\author{Juan P. Aguilera}
\address{Juan P. Aguilera, Department of Mathematics, Ghent University. Krijgslaan 281-S8, 9000 Ghent, Belgium.} 
\address{Institut f\"ur diskrete Mathematik und
  Geometrie, Technische Universit\"at Wien. Wiedner Hauptstrasse 8-10,
  1040 Wien, Austria.} 
\email{aguilera@logic.at}

\author{Sandra M\"uller} 
\address{Sandra M\"uller, Institut f\"ur Mathematik, Universit\"at Wien. Kolingasse 14-16, 1090
  Wien, Austria.} 
\email{mueller.sandra@univie.ac.at} 

\author{Philipp Schlicht} 
\address{Philipp Schlicht, Institut f\"ur Mathematik, Universit\"at Wien. Kolingasse 14-16, 1090 Wien, Austria} 
\address{School of Mathematics, University of Bristol, Fry Building, Woodland Road, Bristol, BS8 1UG, UK} 
\email{philipp.schlicht@univie.ac.at}

\date{\today}

\title{Long Games and $\sigma$-Projective Sets}

\begin{abstract} 
We prove a number of results on the determinacy of $\sigma$-projective sets of reals, i.e., those belonging to the smallest pointclass containing the open sets and closed under complements, countable unions, and projections. We first prove the equivalence between $\sigma$-projective determinacy and the determinacy of certain classes of games of variable length ${<}\omega^2$ (Theorem 2.4). We then give an elementary proof of the determinacy of $\sigma$-projective sets from optimal large-cardinal hypotheses (Theorem 4.4). Finally, we show how to generalize the proof to obtain proofs of the determinacy of $\sigma$-projective games of a given countable length and of games with payoff in the smallest $\sigma$-algebra containing the projective sets, from corresponding assumptions (Theorems 5.1 and 5.4).
\end{abstract} 

\maketitle

\section{Introduction}
Let $\BS$ denote the space of infinite sequences of natural numbers
with the product topology, i.e., the topology generated by basic
(cl)open sets of the form
\[ O(s) = \{x\in\BS: \text{ $x$ extends $s$}\}, \] where
$s \in \omega^{<\omega}$. As usual, we will refer to the elements of
$\BS$ as \emph{reals}. Given a subset $A$ of $\BS$, the
\emph{payoff set}, we consider the Gale-Stewart game $G$ of length $\omega$
as follows:

\begin{figure}[h]
\centering
\begin{tabular}{c|cccccc}
 $\mathrm{I}$ & $x_0$ & & $x_2$ & & $\dots$ &\\ \hline
 $\mathrm{II}$ & & $x_1$ & & $x_3$ & & $\dots$
\end{tabular}
\; \; for $x_0, x_1, \hdots \in \omega$.
\end{figure}

Two players, I and II, alternate turns playing
$x_0, x_1,\hdots \in \omega$ to produce an element
$x = (x_0, x_1, \dots)$ of $\BS$. Player I wins if and only if
$x\in A$; otherwise, Player II wins. One can likewise define longer games by
considering subsets of $\omega^\alpha$, where $\alpha$ is a countable
ordinal. If so, we will again regard $\omega^\alpha$ as a product of
discrete spaces. A game is \emph{determined} if one of players I and II has a winning strategy. A set $A\subset\omega^\omega$ is said to be determined if the corresponding game is.

 These games have been studied extensively; under suitable set-theoretic assumptions, one can prove various classes of them to be determined.
One often studies the determinacy of pointclasses given in terms of definability (a general reference is Moschovakis \cite{Mo09}). A pointclass central to this article is the following:

\begin{definition}
The pointclass of \emph{$\sigma$-projective} sets is the smallest pointclass closed under complements, countable unions,\footnote{Of course, one only considers unions of sets in the same space, as is usual.} and projections.
\end{definition}
In this article, we consider the following classes of games, and their interplay:
\begin{enumerate}
\item games of fixed countable length $\alpha$ whose payoff is $\sigma$-projective;
\item games of variable length ${<}\alpha+\omega^2$ whose payoff is a pointclass containing the clopen sets and contained in the $\sigma$-projective sets;
\item games of countable length $\alpha$ with payoff in other $\sigma$-algebras.
\end{enumerate}

Neeman extensively studied long games and their
connection to large cardinals in \cite{Ne04}. These results are based
on his earlier work in \cite{Ne95} and \cite{Ne02} on games of length
$\omega$, where he started connecting moves in long games with moves
in iteration games. He showed in \cite{Ne04} for example that the
determinacy of games with fixed countable length and analytic payoff
set follows from the existence of Woodin cardinals. Moreover, he also analyzed games of
\emph{continuously coded} length (see also \cite{Ne05}) and games of
length up to a \emph{locally uncountable} ordinal. In \cite{Ne07} he
even showed determinacy for open games of length $\omega_1$, indeed
for a larger class of games of length $\omega_1$, from large
cardinals. That the determinacy of arbitrary games of length
$\omega_1$ is inconsistent is due to Mycielski and has been known for
a long time (see \cite{My64}). 

As for the converse, the determinacy of infinite games implies the existence of inner models with large cardinals (cf. e.g., \cite{Ha78,KW10,Tr13,Uh16,MSW}  and others).\\

\paragraph{{\bf Summary of results}} We begin in Section \ref{SectSimple} by introducing a class of games of variable length below $\omega^2$. We call these games \emph{$\Gamma$-simple}, where $\Gamma$ is a pointclass. We show that $\sigma$-projective determinacy implies the determinacy of $\Gamma$-simple games of length $\omega^2$ where $\Gamma$ is the pointclass of all $\sigma$-projective sets. In Section \ref{sec:sigmaproj} we introduce a class of games we call \emph{decoding games}. These are used to show that $\sigma$-projective determinacy follows from simple clopen determinacy of length $\omega^2$. The proof also shows directly that simple $\sigma$-projective determinacy of length $\omega^2$ follows from simple clopen determinacy of length $\omega^2$

In Section \ref{sec:simpleclopen}, we prove simple clopen determinacy of length $\omega^2$ (and thus $\sigma$-projective determinacy of length $\omega$) from optimal large cardinal assumptions. The proof is level-by-level. An alternative, purely inner-model-theoretic proof of $\sigma$-projective determinacy can be found \cite{Ag18}; our proof, however, requires little inner model theory beyond the definition of the large-cardinal assumption.
Roughly, it consists in repeatedly
applying a theorem of Neeman \cite{Ne04} to reduce a simple
clopen game of length $\omega^2$ to an iteration game on
an extender model with many partial extenders. The difference here is
that players are allowed to drop gratuitously in the iteration game
finitely many times to take advantage of the partial extenders in the
model. 

Finally, in Section \ref{sec:concl}, we exhibit some additional applications of the proof in Section \ref{sec:simpleclopen}. Specifically we prove from (likely optimal) large cardinal assumptions that $\sigma$-projective games of length $\omega\cdot\theta$ are determined, where $\theta$ is a countable ordinal. We also prove, from a hypothesis slightly beyond projective determinacy, that games in the smallest $\sigma$-algebra containing the projective sets are determined.

\section{Simple games of length $\omega^2$}\label{SectSimple}
We begin by noting a result on the determinacy of games of length\footnote{Here and below, we identify
  the spaces $\omega^{\omega^2}$, $(\BS)^\omega$ and
  $\omega^{\omega\times\omega}$, as well as $\omega^{\omega\cdot n}$
  and $(\omega^\omega)^n$.} ${<}\omega^2$:
\begin{theorem}[folklore] \label{TheoremPDFolk}
The following are equivalent:
\begin{enumerate}
\item All projective games of length $\omega$ are determined.
\item All projective games of length $\omega\cdot n$ are determined, for all $n<\omega$.
\item All clopen\footnote{In the product topology on $\omega^{\omega\cdot n}$.} games of length $\omega\cdot n$ are determined, for all $n<\omega$.
\end{enumerate}
\end{theorem}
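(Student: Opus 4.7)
The plan is to establish the cycle $(1)\Rightarrow(2)\Rightarrow(3)\Rightarrow(1)$. Since $(2)\Rightarrow(3)$ is immediate because every clopen set is projective, and $(2)\Rightarrow(1)$ follows by taking $n=1$, the work lies in proving $(1)\Rightarrow(2)$ and $(3)\Rightarrow(1)$.

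For $(1)\Rightarrow(2)$, I would argue by induction on $n$. The base case $n=1$ is hypothesis (1). For the inductive step, given a projective payoff $A\subseteq\omega^{\omega\cdot(n+1)}$, identify it with a subset of $\omega^\omega\times\omega^{\omega\cdot n}$. For each $x\in\omega^\omega$, the residual game $G_x$ of length $\omega\cdot n$ with payoff $A_x=\{y:(x,y)\in A\}$ has a projective payoff uniformly in $x$, so is determined by the inductive hypothesis. Using determinacy, the set
\[ W=\{x\in\omega^\omega:\text{Player I wins } G_x\} \]
admits two equivalent projective definitions---one obtained by existentially quantifying over a winning strategy for Player I, the other by negating existential quantification over winning strategies for Player II---so $W$ is projective. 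The original game is then equivalent to a length-$\omega$ game with payoff $W$, which is determined by (1).

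For the main implication $(3)\Rightarrow(1)$, the plan is to unravel the projective payoff and simulate the resulting quantifier prefix by a longer game. Any projective $A\subseteq\omega^\omega$ can be written as
\[ A(x)\iff Q_1 y_1\, Q_2 y_2\cdots Q_n y_n\, B(x,y_1,\dots,y_n), \]
with alternating real quantifiers $Q_i\in\{\exists,\forall\}$ and $B$ either open or closed in $\omega^\omega\times(\omega^\omega)^n$, according to the parity of $n$ and the pointclass of $A$. I would then simulate this prefix by a game of length $\omega\cdot(n+1)$: in phase $0$ both players alternate to produce $x$, and in phase $i$ (for $1\le i\le n$) the single player corresponding to $Q_i$ makes all $\omega$ moves alone, producing $y_i$. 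The payoff $B(x,y_1,\dots,y_n)$ is open or closed in $\omega^{\omega\cdot(n+1)}$. A standard challenge-move trick then converts this to a clopen payoff in a game of length $\omega\cdot(n+2)$: one extra $\omega$-block allows the player attempting to verify or refute $B$ to specify, via a single coded move, a finite region of the preceding play to inspect, so that the check becomes clopen. Hypothesis (3) delivers a winning strategy for the resulting clopen game, and one verifies that its restriction to phase $0$ wins the original $A$-game, since optimizing over the later phases exactly recovers the quantifier prefix defining $A$.

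The main obstacle is the open/closed-to-clopen reduction: it must be arranged so that the total length grows only by a single $\omega$-block and the resulting payoff is genuinely clopen rather than just open or closed. This is delicate but classical, and once it is in place, everything else is unravelling of quantifiers and bookkeeping about projective pointclasses and the translation of strategies between the long and short games.
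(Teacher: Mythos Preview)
Your proposal is correct and essentially matches the paper's approach: the paper defers the proof to an adaptation of Theorem~\ref{main}, where $(3)\Rightarrow(1)$ is established via the \emph{decoding game} (Definition~\ref{def:decgame}, Lemmata~\ref{LemmaSCimpliesSP0} and~\ref{LemmaSCimpliesSP})---which is exactly your unravel-the-quantifier-prefix-then-challenge-move construction---and $(1)\Rightarrow(2)$ is the same induction on $n$ you sketch (cf.\ the proof of Proposition~\ref{PropositionSimpleClopenEasyProof}). The only cosmetic adjustment is that in phases $i\geq 1$ both players must still alternate moves, with only the relevant player's moves contributing to $y_i$, as in clauses~\eqref{defeq:decgamep} and~\eqref{defeq:decgameu} of Definition~\ref{def:decgame}.
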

Instead of providing a proof of Theorem \ref{TheoremPDFolk}, we refer the reader to the proof of Theorem \ref{main} below, an easy adaptation of which suffices (cf. Remark \ref{RemarkLocal}).

Our first result is the analog of Theorem \ref{TheoremPDFolk} for $\sigma$-projective games. Although the equivalence between the first two items remains true if one replaces ``projective'' with ``$\sigma$-projective,'' the one between the last two does not.
Instead, one needs to consider a larger class of games that are still decided in less than $\omega^2$ rounds, in the sense
that for any $x\in \omega^{\omega^2}$ there is $n\in\omega$ such
that for all $y\in \omega^{\omega^2}$, if
$y\upharpoonright \omega\cdot n = x \upharpoonright\omega\cdot n$,
then
\[x \text{ is a winning run for Player I if and only if $y$ is.}\]
Note that every game of
length $\omega\cdot n$ can be seen as a game of this form, for any
$n\in\omega$. For the definition below, we adapt the convention
that if $n\in\omega$, then a subset $A$ of
$\omega^{\omega\cdot n}$ can be identified with
\[\{x\in\omega^{\omega^2}: x\upharpoonright \omega\cdot n \in A\}.\]

\begin{definition}\label{DefSimple}
  Let $\Gamma$ be a collection of subsets of $\omega^{\omega^2}$
  (each $A \in \Gamma$ identified with a subset of $\omega^{\omega\cdot n}$ for some
  $n \in \omega$ as above). A game of length $\omega^2$ is
  $\Gamma$-\emph{simple} if it is obtained as follows:
\begin{enumerate}
\item For every $n \in \omega$, games that are decided after $\omega \cdot n$ moves such that their payoff restricted to sequences of length $\omega \cdot n$ is in $\Gamma$ are $\Gamma$-simple.
\item \label{eq:DefSimpleI} Let $n\in\omega$ and for each
  $i \in \omega$ let $G_i$ be a $\Gamma$-simple game. Then the game
  $G$ obtained as follows is $\Gamma$-simple: Players I and II take
  turns playing natural numbers for $\omega \cdot n$ moves, i.e., $n$
  rounds in games of length $\omega$. Afterwards, Player I plays some
  $i\in\omega$. Players I and II continue playing according to the
  rules of $G_i$ (keeping the first $\omega \cdot n$ natural numbers
  they have already played, but not $i$).
\item \label{eq:DefSimpleII} Let $n\in\omega$ and for each
  $i \in \omega$ let $G_i$ be a $\Gamma$-simple game. Then the game
  $G$ obtained as follows is $\Gamma$-simple: Players I and II take
  turns playing natural numbers for $\omega \cdot n$ moves, i.e., $n$
  rounds in games of length $\omega$. Afterwards, Player II plays some
  $i\in\omega$. Players I and II continue playing according to the
  rules of $G_i$ (keeping the first $\omega \cdot n$ natural numbers
  they have already played, but not $i$).
\end{enumerate}
\end{definition}

We are mainly interested in games of length $\omega^2$ which are \emph{simple
  clopen}, i.e., which are $\Gamma$-simple for $\Gamma = \DELTA^0_1$
the collection of clopen sets.
Let us start by noting that every simple clopen game has in fact a
payoff set which is clopen in $\omega^{\omega\times\omega}$ (this fact will not be needed, but it justifies our terminology).

\begin{lemma}\label{LemmaSimpleClopenIsClopen}
  Let $G$ be a simple clopen game of length $\omega^2$ with payoff set
  $B$. Then $B$ is clopen in $\omega^{\omega\times\omega}$.
\end{lemma}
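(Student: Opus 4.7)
The plan is to induct on the rank of the inductive definition of ``simple clopen game'': assign rank $0$ to any game obtained via clause (1), and rank $\sup_i \rank(G_i) + 1$ to a game built from $(G_i)_{i \in \omega}$ via clause (2) or (3). This assigns a countable ordinal rank to every simple clopen game, and I prove by induction on this rank that every simple clopen game of length $\omega^2$ has clopen payoff set in $\omega^{\omega \times \omega}$.

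For the base case, a game $G$ decided after $\omega \cdot n$ moves with clopen payoff $A \subseteq \omega^{\omega \cdot n}$ has payoff set $\pi_n^{-1}(A)$, where $\pi_n : \omega^{\omega^2} \to \omega^{\omega \cdot n}$ is the continuous projection $x \mapsto x \upharpoonright \omega \cdot n$; continuous preimages preserve cloppenness. For the inductive step, consider clause (2) (clause (3) being symmetric): let $G$ be built from $(G_i)_{i \in \omega}$ at stage $\omega \cdot n$, with each $G_i$ having clopen payoff $B_i$ by the inductive hypothesis. Set $C_i = \{x \in \omega^{\omega^2} : x(\omega \cdot n) = i\}$, a clopen set, and observe that $\{C_i\}_{i \in \omega}$ is a clopen partition of $\omega^{\omega^2}$. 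Let $\tau : \omega^{\omega^2} \to \omega^{\omega^2}$ be the shift map that excises the coordinate at position $\omega \cdot n$, namely $\tau(x) = x \circ \phi$ for $\phi$ the unique order-preserving bijection $[0, \omega^2) \to [0, \omega^2) \setminus \{\omega \cdot n\}$; this has the right type because $\omega \cdot n + \omega^2 = \omega^2$, and it is continuous because each coordinate of $\tau(x)$ is a single coordinate of $x$. The payoff of $G$ then decomposes as
\[B = \bigsqcup_{i \in \omega} \bigl( C_i \cap \tau^{-1}(B_i) \bigr), \qquad B^c = \bigsqcup_{i \in \omega} \bigl( C_i \cap \tau^{-1}(B_i^c) \bigr),\]
with each summand clopen as an intersection of two clopen sets. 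Hence $B$ and $B^c$ are both open, so $B$ is clopen.

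The only subtle point, I expect, is fixing the convention underlying clauses (2) and (3): the move $i$ must be assigned exactly one coordinate of the length-$\omega^2$ play (here position $\omega \cdot n$), and after excising it and reindexing the remaining $\omega^2$ coordinates order-preservingly, the result is interpreted as a play of $G_i$. Once this convention is pinned down, the decomposition of $B$ and $B^c$ above is forced, and the lemma follows by the induction just described.
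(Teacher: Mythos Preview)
Your proof is correct and follows essentially the same approach as the paper: both argue by induction on the construction of simple clopen games, handle the base case via continuity of the restriction map, and in the inductive step decompose $B$ (and its complement) according to the value of the auxiliary move at position $\omega\cdot n$ using the shift that excises that coordinate. Your $C_i \cap \tau^{-1}(B_i)$ is exactly the paper's $\{y : y(\omega\cdot n) = i \wedge y^* \in B_i\}$, and your decomposition of $B^c$ is the de~Morgan dual of the paper's intersection formula for $B$; the only difference is that you make the reindexing map $\tau$ and the induction rank explicit.
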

\begin{proof}
  We prove this by induction on the definition of simple clopen
  games. In the case that $G$ is a clopen game of fixed length
  $\omega \cdot n$ it is clear that $B$ is clopen. So suppose that we
  are given simple clopen games $G_i$, $i<\omega$ with payoff sets
  $B_i$. Moreover, suppose for notational simplicity that $n = 1$,
  i.e., the players play one round of length $\omega$ before Player I
  plays $i \in \omega$ to decide which rules to follow. Inductively,
  we can assume that every $B_i$ is clopen in
  $\omega^{\omega\times\omega}$. Let $G$ be the game obtained by
  applying \eqref{eq:DefSimpleI} in Definition \ref{DefSimple}.
  For $x \in \omega^{\omega\times\omega}$, write $x^*$ for
  \[ x\upharpoonright \omega ^\frown x\upharpoonright [\omega+1,
    \omega^2). \] Then $x$ is a winning run for Player I in $G$ iff
\begin{align*}
  x \in \bigcup_{i\in\omega}\left\{
  y\in \omega^{\omega\times\omega}:  y(\omega) = i \wedge y^* \in B_i
  \right\}.
\end{align*}
Each $B_i$ is open, so the payoff set $B$ of $G$ is open. Additionally, $x \in B$ if,
and only if,
\begin{align*}
  x \in \bigcap_{i\in\omega}\left\{
  y\in \omega^{\omega\times\omega}:  y(\omega) \neq i \vee y^* \in B_i
  \right\}.
\end{align*}
Each $B_i$ is closed, so $B$ is closed. Therefore, $B$ is clopen.

The argument for applying \eqref{eq:DefSimpleII} in Definition
\ref{DefSimple} is analogous.
\end{proof}

The main fact about simple clopen games is that their determinacy 
is already equivalent to
determinacy of $\Gamma$-simple games where $\Gamma$ is the pointclass
of all $\sigma$-projective sets:

\begin{theorem}\label{main}
The following are equivalent:
\begin{enumerate}
\item All $\sigma$-projective games of length $\omega$ are determined.
\item All simple $\sigma$-projective games of length $\omega^2$ are determined.
\item All simple clopen games of length $\omega^2$ are determined.
\end{enumerate}
\end{theorem}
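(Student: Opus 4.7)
The plan is to close the cycle $(1) \Rightarrow (2) \Rightarrow (3) \Rightarrow (1)$. The implication $(2) \Rightarrow (3)$ is immediate, since every clopen set is $\sigma$-projective, whence every simple clopen game is in particular a simple $\sigma$-projective game.

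For $(1) \Rightarrow (2)$, the first step is to prove the direct analog of Lemma \ref{LemmaSimpleClopenIsClopen} for the $\sigma$-projective pointclass: by induction on the definition of simple $\sigma$-projective games, every such game $G$ has payoff set $B \subseteq \omega^{\omega \times \omega}$ that is $\sigma$-projective. The inductive calculation essentially repeats the proof of Lemma \ref{LemmaSimpleClopenIsClopen}: under rule \eqref{eq:DefSimpleI}, the payoff is the countable union
\[ \bigcup_{i \in \omega}\bigl\{y \in \omega^{\omega \times \omega} : y(\omega \cdot n) = i \wedge y^* \in B_i\bigr\}, \]
and under rule \eqref{eq:DefSimpleII} it is the corresponding countable intersection; both remain $\sigma$-projective, since the pointclass is closed under countable unions, complements, and continuous preimages. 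Once this is in hand, fix a recursive bijection $\pi : \omega \to \omega \times \omega$ preserving the parity of moves (so that Player I positions go to Player I positions under the induced homeomorphism $\omega^{\omega \times \omega} \cong \omega^\omega$); then $G$ is equivalent to a game of length $\omega$ with $\sigma$-projective payoff, which by $(1)$ is determined, and a winning strategy there pulls back to one in $G$.

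The implication $(3) \Rightarrow (1)$ will be established in Section \ref{sec:sigmaproj} via the decoding-games construction: given any $\sigma$-projective set $A \subseteq \omega^\omega$, one produces a simple clopen game of length $\omega^2$ whose determinacy transfers directly to the length-$\omega$ game with payoff $A$.

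The main obstacle is this last implication. One must simulate the unbounded iteration of projections, complements, and countable unions that generate the $\sigma$-projective pointclass using only the combinatorial structure of a simple clopen game of length $\omega^2$, while ensuring that on every run the simulating game terminates after finitely many rounds of length $\omega$, as the simplicity requirement demands. The decoding games of Section \ref{sec:sigmaproj} accomplish this by letting the players play integers that encode a descending sequence of sub-pointclasses, which must reach the clopen level in finitely many $\omega$-rounds.
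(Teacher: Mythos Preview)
Your argument for $(1)\Rightarrow(2)$ has a genuine gap. Showing that the payoff $B\subseteq\omega^{\omega\times\omega}$ of a simple $\sigma$-projective game is $\sigma$-projective is fine, but the reindexing step fails: a parity-preserving bijection $\pi\colon\omega\to\omega\times\omega$ induces a homeomorphism of spaces, not an equivalence of games. In the length-$\omega^2$ game a player moving at stage $\omega\cdot m+k$ has access to the entire infinite history $x\upharpoonright\omega\cdot m$, whereas after reindexing the same move occurs at the finite stage $n=\pi^{-1}(\omega\cdot m+k)$, where only the coordinates $\pi(0),\dots,\pi(n-1)$ are known. No bijection can ensure $\{\alpha:\alpha<\pi(n)\}\subseteq\{\pi(0),\dots,\pi(n-1)\}$ for all $n$, since that would force $\pi$ to be order-preserving and hence not surjective onto $\omega^2$. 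Concretely, take the clopen game of length $\omega\cdot 2$ with payoff $\{z:z(\omega)=z(1)\}$: Player I wins by copying $z(1)$ at stage $\omega$. Under any parity-preserving $\pi$ with $\pi(0)=\omega$ and $\pi(1)=1$, the reindexed length-$\omega$ game asks Player I to commit to $z(\omega)$ before Player II reveals $z(1)$, and Player II wins. So winning strategies do not pull back.

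The paper's route for $(1)\Rightarrow(3)$ (and hence $(1)\Rightarrow(2)$) is genuinely different: it proceeds by induction on the game rank of $G$, producing for each simple clopen $G$ a $\sigma$-projective game $H$ of length $\omega$ that is equivalent to $G$ in the sense that the same player has a winning strategy. The payoff of $H$ is defined by quantifying over strategies for subsequent rounds and appealing to the inductive hypothesis that ``Player I has a winning strategy in $G_p$'' is a $\sigma$-projective condition on $p$ when $G_p$ has smaller rank. This is precisely the work that your bijection shortcut tries to avoid, and it cannot be avoided. Your handling of $(2)\Rightarrow(3)$ and your deferral of $(3)\Rightarrow(1)$ to the decoding-game construction are both correct and match the paper.
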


The proof of $\sigma$-projective determinacy of length $\omega$ from simple clopen determinacy, i.e., $(3) \Rightarrow (1)$, will take place in the next section (Proposition \ref{PropositionSC}). 
For now, we content ourselves with showing that $\sigma$-projective determinacy of length $\omega$ implies simple clopen determinacy of length $\omega^2$, i.e., $(1) \Rightarrow (3)$ (see Proposition \ref{PropositionSimpleClopenEasyProof}), although the proof we give easily adapts to show simple $\sigma$-projective determinacy of length $\omega^2$ from the same hypothesis, i.e., $(1) \Rightarrow (2)$ (see Proposition \ref{PropositionSC2}). Note that $(2) \Rightarrow (3)$ is obvious.
It will be convenient for the future to introduce the definition of the \emph{game
  rank} of a simple clopen game. 

To each simple clopen game $G$ of length
$\omega^2$ we associate a countable ordinal $\gr(G)$, the game
  rank of $G$, by induction on the definition of simple clopen
games. If $G$ is a game of fixed length $\omega\cdot n$, then
$\gr(G) = n$. If $G$ is obtained from games $G_0, G_1, \hdots$, and
from an ordinal $\omega \cdot n$ as in Definition \ref{DefSimple}, we
let
\[ \gr(G) = \sup\{\gr(G_i)+\omega:i\in\omega\} + n. \] 

\begin{remark} \label{RemarkLocal}
The proof of Theorem \ref{main} is local. We leave the computation of the precise complexity bounds to the curious reader, but we mention that e.g., the proof shows the equivalence among 
\begin{enumerate}
\item the determinacy of games of length $\omega$ which are $\PI^1_n$ for some $n\in\omega$;
\item the determinacy of simple clopen games of length $\omega^2$ of rank ${<}\omega$
\item the determinacy of simple games of rank ${<}\omega$ which are $\PI^1_n$ for some $n\in\omega$.
\end{enumerate}
Thus, Theorem \ref{main} generalizes Theorem \ref{TheoremPDFolk}.
\end{remark}

The reason why we have chosen to define the game rank this way is that it will make some arguments by induction easier later on.
Let us consider some examples: if
$\gr(G) = \omega$, then $G$ is essentially a game in which an infinite
collection of games, each of bounded length, are given, and one of the
players begins by deciding which one of them they will play. If
$\gr(G) = \omega+1$, then the game is similar, except that the player
does not decide which game they will play until the first $\omega$
moves have been played. More generally, a game has limit rank if and only if it begins with one player choosing one among a countably infinite collection of games that can be played. 

\begin{remark}
Let $G$ be a simple clopen game of rank $\alpha$ and $p$ be a partial play of $G$. Denote by $G_p$ the game that results from $G$ after $p$ has been played. Then $G_p$ is a simple clopen game of rank $\leq \alpha$.
\end{remark}

Clearly, every simple clopen game has a countable rank. Now we turn to the proof of $(1) \Rightarrow (3)$ in Theorem \ref{main}.

\begin{proposition}\label{PropositionSimpleClopenEasyProof}
Suppose that all $\sigma$-projective games of length $\omega$ are determined. Then all simple clopen games of length $\omega^2$ are determined.
\end{proposition}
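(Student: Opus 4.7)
The plan is to induct on the game rank $\alpha = \gr(G)$, strengthening the inductive hypothesis to include a descriptive-complexity bound on winning-position sets. Precisely, I will prove simultaneously that every simple clopen game $G$ of rank $\alpha$ is determined and that, for every $k \in \omega$, the set
\[
W_k(G) := \{\, p \in \omega^{\omega \cdot k} : \text{Player I wins the subgame } G_p \,\}
\]
is $\sigma$-projective, upon identifying $\omega^{\omega \cdot k}$ with $\BS$ in the usual way.

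For the base case $\alpha = m$ finite, $G$ is a clopen game of fixed length $\omega \cdot m$. Since $\sigma$-projective determinacy of length $\omega$ entails projective determinacy of length $\omega$, Theorem \ref{TheoremPDFolk} gives determinacy of $G$ and of each $G_p$, while $W_k(G)$ is projective (for $k \leq m$, expressible by finitely many alternating quantifiers over $\BS$ applied to a clopen condition; for $k > m$, it is clopen), hence $\sigma$-projective.

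For the inductive step, I write $\alpha = \lambda + n$ with $\lambda$ limit and $n \in \omega$. By Definition \ref{DefSimple}, $G$ consists of $\omega \cdot n$ ordinary rounds, after which one player (call them Player X) chooses $i \in \omega$ and play continues according to $G_i$, with $\gr(G_i) < \lambda$. Applying the inductive hypothesis to each $G_i$, I get that $G_i$ is determined and $W_n(G_i)$ is $\sigma$-projective. For $p \in \omega^{\omega \cdot n}$, Player I wins $G_p$ iff Player I wins the subgame of $G_i$ after history $p$ for some $i$ (if X $=$ I) or every $i$ (if X $=$ II); so $W_n(G)$ is either $\bigcup_i W_n(G_i)$ or $\bigcap_i W_n(G_i)$, a countable union or intersection of $\sigma$-projective sets, hence $\sigma$-projective. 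The $\sigma$-projective analog of Theorem \ref{TheoremPDFolk} (noted immediately after its statement in the excerpt) gives $\sigma$-projective determinacy of length $\omega \cdot n$, so the auxiliary length-$\omega \cdot n$ game with payoff $W_n(G)$ for Player I is determined; composing a winning strategy there with the (inductively determined) winning strategies in each $G_p$ (which is simple clopen of rank $\leq \lambda < \alpha$) yields a winning strategy in $G$. The complexity claim at $k \neq n$ follows by an analogous analysis of $G_p$, using the remark in the excerpt that $G_p$ is itself simple clopen of rank $\leq \alpha$.

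The main obstacle I expect is keeping the $\sigma$-projective bound tight through the induction: one has to identify the subgame of $G$ after $p$ followed by the choice of $i$ with the subgame of $G_i$ after history $p$ (so that the union/intersection formula for $W_n(G)$ is correct), which relies squarely on the convention in Definition \ref{DefSimple} that the first $\omega \cdot n$ moves are ``kept'' when continuing under the rules of $G_i$. Once that bookkeeping is set up cleanly, the rest of the argument is essentially mechanical.
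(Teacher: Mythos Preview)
Your proof is correct and close in spirit to the paper's: both induct on game rank with a strengthened hypothesis that the winning-position sets are $\sigma$-projective. The organizational difference is that the paper reduces $G$ all the way to a $\sigma$-projective game $H$ of length $\omega$ via an explicit downward induction through auxiliary games $H_m$, whereas you stop at $W_n(G)$ and invoke $\sigma$-projective determinacy of length $\omega\cdot n$ as a black box. That black box is established by exactly the $H_m$-style unfolding (alternate strategy quantifiers over a $\sigma$-projective set, then compose), so the mathematical content is the same; your packaging is arguably more modular.

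One caution on the citation: as the paper is organized, the $\sigma$-projective analogue of Theorem~\ref{TheoremPDFolk} is justified \emph{by} the proof of Theorem~\ref{main}, of which the present proposition is a component. Citing it inside this proof is therefore cosmetically circular. The fix is easy---just insert the direct argument that $\sigma$-projective determinacy of length $\omega$ yields length $\omega\cdot n$ (downward induction: the set of length-$k$ positions from which Player~I wins is obtained from the length-$(k{+}1)$ set by $\exists\sigma\,\forall\tau$ over the $\sigma$-projective payoff, hence stays $\sigma$-projective; determinacy then lets you compose). Your handling of the $k\neq n$ complexity claim is also fine once you note that removing the single ``choice'' move at position $\omega\cdot n$ does not change the order type of the partial play, so $W_k(G)$ for $k>n$ is a countable union over $i$ of continuous preimages of $W_k(G_i)$.
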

\begin{proof}
Let us say that two games $G$ and $H$ are \emph{equivalent} if the following hold:
\begin{enumerate}
\item Player I has a winning strategy in $G$ if and only if she has one in $H$; and
\item Player II has a winning strategy in $G$ if and only if she has one in $H$.
\end{enumerate}

We prove the proposition by induction on the game rank of a simple clopen game $G$. 
In the case that the game rank is a successor ordinal, we additionally show that the game is equivalent to a $\sigma$-projective game of length $\omega$ (this is clear in the limit case). Suppose that $\alpha$ is a limit ordinal and this has been shown for games of rank ${<}\alpha$. Let $\alpha+n$ be the rank of $G$, where $n\in\omega$. If $n = 0$, then the result follows easily: by the definition of game rank, the rules of $G$ dictate that one player must begin by choosing one amongst an infinite sequence of games $G^i$. If that player has a winning strategy in any one of them, then choosing that game will guarantee a win in $G$; otherwise, the induction hypothesis yields a winning strategy for the other player in each game $G^i$ and thus in $G$. 

If $0<n$, say, $n = k+1$, then one argues as follows. Given a partial play $p$ of $G$, we denote by $G_p$ the game $G$ after $p$ has been played. 
Consider the following game, $H$:
\begin{enumerate}
\item Players I and II alternate $\omega$ many turns to produce a real number $x$.
\item Afterwards, the game ends. Player I wins if and only if 
\[\text{$\exists x_1\in\omega^\omega\, \forall y_1\in\omega^\omega\, \exists x_2\in\omega^\omega\,\hdots\, \forall y_k\in\omega^\omega$ Player I has a winning strategy in $G_p$,}\]
where $p = \langle x, x_1*y_1, \hdots, x_k*y_k \rangle$.\footnote{Here, $x*y$ denotes the result of facing off the strategies coded by the reals $x$ and $y$.}
\end{enumerate}

\begin{claim}\label{cl:1}
$H$ is equivalent to $G$. 
\end{claim}

Granted the claim, it is easy to prove the proposition, for, letting $p$ be as above, the rules of $G_p$ dictate that one of the players must choose one amongst an infinite sequence of games $G^i$. Assume without loss of generality that this is Player I.
By induction hypothesis, the set of $p$ such that there exists $i$ so that Player I has a winning strategy in $G^i$ is $\sigma$-projective. Hence, the payoff set of $H$ is $\sigma$-projective, as was to be shown. 

It remains to prove the claim. Thus, let $0 \leq m \leq k$ and consider the following game $H_m$:
\begin{enumerate}
\item Players I and II alternate $\omega\cdot (m+1)$ many turns to produce real numbers $z_0,\hdots, z_m$.
\item Afterwards, the game ends. Player I wins if and only if 
\[\text{$\exists x_{m+1}\, \forall y_{m+1}\, \exists x_{m+2}\,\hdots\, \forall y_{k}$ Player I has a winning strategy in the game $G_p$,}\]
where $p = \langle z_0,\hdots, z_m, x_{m+1}*y_{m+1}, \hdots, x_{k}*y_{k} \rangle$.
\end{enumerate}
Thus, $H = H_0$.
By induction on $q = k-m$ (i.e., by downward induction on $m$), we show that $H_m$ is equivalent to $G$. A simple modification of the argument shows that $(H_m)_p$ is equivalent to $G_p$, for each $p \in (\omega^\omega)^{l}$ and each $l\leq m+1$; this is possible because $G_p$ is a simple clopen game of rank $\leq \alpha+n$.
We will use the equivalence between $(H_m)_p$ and $G_p$ as part of the induction hypothesis. 

We have shown (by the induction hypothesis for the proposition) that $G$ is equivalent to $H_k$. The same argument applied to $G_p$ shows that $G_p$ is equivalent to $(H_k)_p$ for every $p \in (\BS)^{k+1}$ and thus that $G_p$ is determined. Moreover, Player I wins a run $p \in (\BS)^{m+1}$ of $H_m$ if and only if she has a winning strategy in $(H_{m+1})_p$ and Player II wins a run $p \in (\BS)^{m+1}$ of $H_m$ if and only if Player I does not have a winning strategy in $(H_{m+1})_p$; however, $(H_{m+1})_p$ is determined for every $p \in (\BS)^{m+1}$, as it is a $\sigma$-projective game of length $\omega$ (this follows from an argument as right after the statement of Claim \ref{cl:1}). Moreover, the induction hypothesis (for the claim) shows that a player has a winning strategy in $(H_{m+1})_p$ if and only if she has one in $G_p$. This shows that a player has a winning strategy in $H_m$ if and only if she has one in $G$, as was to be shown.
\end{proof}

\section{Decoding games}\label{sec:sigmaproj}

Our first goal in this section is to prove the following proposition, i.e., $(3) \Rightarrow (1)$ in Theorem \ref{main}.

\begin{proposition}\label{PropositionSC}
  Suppose simple clopen games of length $\omega^2$ are
  determined. Then $\sigma$-projective games of length $\omega$ are
  determined.
\end{proposition}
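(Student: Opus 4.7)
The plan is to associate, with each $\sigma$-projective set $A \subseteq \omega^\omega$, a simple clopen game $D_A$ of length $\omega^2$ whose winner coincides with the winner of the ordinary Gale--Stewart game $G_A$ with payoff $A$; determinacy of $D_A$, which holds by hypothesis, will then transfer to $G_A$. The idea is that, after the first $\omega$ moves of $D_A$ produce a real $x$ exactly as in $G_A$, the players spend the remaining (variable-length, below $\omega^2$) portion of the game ``verifying'' the truth value of $x \in A$ by descending through a $\sigma$-projective code for $A$.

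Concretely, I would fix a countable well-founded tree $T_A$ coding $A$, with leaves labelled by clopen sets in spaces $(\omega^\omega)^k$ and internal nodes labelled by one of the three operations $\neg$, $\bigcup_n$, or $\exists$ that generate the $\sigma$-projective pointclass from the clopen base. I would then define $D_A$ recursively on $T_A$. At a clopen leaf of arity $k$, $D_A$ is the clopen game of length $\omega \cdot k$ with that payoff. For $A = \neg B$, $D_A$ is obtained from $D_B$ by swapping the roles of Players I and II, which preserves being simple clopen by an immediate sub-induction. For $A = \bigcup_n A_n$ of arity $k$, after the first $\omega \cdot k$ moves Player I plays an index $n \in \omega$ and the game continues as $D_{A_n}$ retaining the shared moves---exactly an application of Rule (2) of Definition~\ref{DefSimple}. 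For a projection $A = \{\vec x : \exists y\, B(\vec x, y)\}$ with $B$ of arity $k+1$, after the first $\omega \cdot k$ moves the players play one further alternating $\omega$-round producing a sequence $y'$, and the game continues as the decoding of an auxiliary set $B^* \subseteq (\omega^\omega)^{k+1}$ defined by $B^*(\vec x, y') \iff B(\vec x, y)$, where $y$ consists of Player I's moves in the $y'$-round; since Player I's moves can realise any $y \in \omega^\omega$ while Player II's moves in $y'$ are ignored, one has $A = \{\vec x : \exists y'\, B^*(\vec x, y')\}$, and $B^*$ has strictly smaller rank than $A$ in its code, so the recursion terminates. Universal projections are handled dually via Player II's sub-sequence, or via $\forall = \neg \exists \neg$.

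It then remains to verify by induction on $T_A$ that a player has a winning strategy in $G_A$ if and only if she has one in $D_A$. Given a winning strategy for Player I in $G_A$, she plays the $x$-phase of $D_A$ according to it (forcing $x \in A$), and then plays the decoding phase by producing the witnesses required by the $\sigma$-projective definition of $x \in A$: selecting the correct index at each countable union, playing her moves in the alternating $\omega$-round to encode witness reals at existential projections, and refuting Player II's challenges at universals. Conversely, any winning $D_A$-strategy for Player I restricts to an $x$-strategy in $G_A$, and correctness of the decoding phase forces each resulting $x$ to lie in $A$. I expect the projection case to be the main obstacle, since Definition~\ref{DefSimple} only permits a single natural number to be played at each choice point and thus does not let one player unilaterally play an entire real as a witness for $\exists y$; the rewriting $A = \{\vec x : \exists y'\, B^*(\vec x, y')\}$ above is the essential bookkeeping trick, converting the existential into a standard alternating $\omega$-round with $B^*$ absorbing the irrelevant half of $y'$, after which the remaining steps are routine inductions on $T_A$.
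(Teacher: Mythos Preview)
Your plan is essentially the paper's own proof: the paper defines exactly these \emph{decoding games} (Definition~\ref{def:decgame}), proves the equivalence with $G_A$ (Lemma~\ref{LemmaSCimpliesSP0}), and shows they are simple clopen (Lemma~\ref{LemmaSCimpliesSP}), with your $B^*$ device for projections being precisely the paper's $G^{P,k}$ construction in Lemma~\ref{LemmaSCimpliesSP}. One point to tighten is negation: rather than swapping roles in an already-built $D_B$---which, read literally, would also swap who moves in the $x$-phase and spoil the correspondence with $G_A$ (e.g., if $B=B_0\cup B_1$ with $B_0,B_1$ a clopen partition of $\omega^\omega$, complementing the payoff of $D_B$ still leaves Player~I choosing the index, so she wins $D_{\neg B}$ trivially while $G_{\neg B}$ has empty payoff)---the paper first normalizes the code via de~Morgan's laws so that no complements appear, working instead with the full list of operations $\bigcup,\bigcap,p,u$; this is the clean implementation of what your ``swap roles'' is meant to achieve.
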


In order to prove Proposition \ref{PropositionSC}, we introduce a
representation of $\sigma$-projective sets.

\begin{definition}
Fix an enumeration 
\[\{A_{n+1}:n\in\omega\}\]
of all basic open and basic closed sets in each $(\BS)^k$, for
$1\leq k<\omega$.  Suppose $A\subset (\BS)^k$ is $\sigma$-projective.  A
\emph{$\sigma$-projective code} $[A]$ of $A$ is defined inductively as
follows:
\begin{enumerate}
\item If $A$ is basic open, then $[A] = \langle n \rangle$, where $A = A_n$.
\item If $A$ is basic closed, then $[A] = \langle n \rangle$, where $A = A_n$.
\item If $A = \bigcup_i A_i$, then $[A] = \langle [A_0], [A_1], \hdots \rangle$.
\item If $A = \bigcap_i A_i$, then $[A] = \langle 0, [A_0], [A_1], \hdots \rangle$.
\item If $A = (\BS)^k \setminus B$ for some $k \in \omega$, then
  $[A] = \langle 1, [B] \rangle$.
\item If $A = p[B]$, then $[A] = \langle 2, [B] \rangle$.
\item If $A = u[B]$, then $[A] = \langle 3, [B] \rangle$.
\end{enumerate}
\end{definition}
Here, $u[B]$ denotes the dual of the projection,
 \[ u[B] = \{x\in\BS:\forall y\, (x,y) \in B\}. \]

A $\sigma$-projective code of a given set is not unique. In fact,
\begin{lemma}
Let $A$ be $\sigma$-projective. 
\begin{enumerate}
\item $A$ has a $\sigma$-projective code in which no complements and
  no basic open sets appear.
\item $A$ has a $\sigma$-projective code in which no complements and
  no basic closed sets appear.
\end{enumerate}
\end{lemma}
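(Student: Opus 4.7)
The plan is to prove (1) and (2) simultaneously, by induction on the $\sigma$-projective code $[A]$, under a strengthened induction hypothesis: for every $\sigma$-projective $A\subseteq(\BS)^k$, both $A$ and its complement $(\BS)^k\setminus A$ admit a code with no complements and no basic open atoms, as well as a code with no complements and no basic closed atoms. The symmetric formulation in $A$ and $\neg A$ is forced because pushing a complement past any $\sigma$-projective operation via De Morgan turns it into the dual operation applied to the complements of its operands, so at each stage one must be able to code both a set and its complement positively.

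The base cases carry essentially all the content. If $A$ is a basic open subset of $(\BS)^k$ then $A$ is in fact clopen: in $\BS$ one has the identities $O(s)=\bigcap\{\BS\setminus O(s'):s'\perp s,\ |s'|=|s|\}$ and $\BS\setminus O(s)=\bigcup\{O(s'):s'\perp s,\ |s'|=|s|\}$, and analogous product formulas hold in $(\BS)^k$. Using these, together with the trivial codes $\langle n\rangle$ for $A=A_n$ indexed as basic open and $\langle m\rangle$ for $A=A_m$ indexed as basic closed, I would write down by hand, for each of $A$ and $\neg A$, both an open-positive and a closed-positive code, in every case without invoking the complement clause. The case in which $A$ is basic closed is symmetric.

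The inductive step then splits on the outermost clause of $[A]$. For $A=\bigcup_i A_i$ the induction hypothesis supplies codes of both desired types for every $A_i$ and for every $\neg A_i$; clause (3) of the code definition assembles the codes for the $A_i$ into codes for $A$, while clause (4) assembles the codes for the $\neg A_i$ into codes for $\neg A=\bigcap_i\neg A_i$. The case $A=\bigcap_i A_i$ is analogous. For $A=pB$ and $A=uB$ I would use the dualities $\neg p B=u\neg B$ and $\neg u B=p\neg B$ to convert the inductive codes of $B$ and $\neg B$ into codes for $A$ and $\neg A$. Finally, if $A=\neg B$, then $A=\neg B$ and $\neg A=B$ are both handled directly by applying the induction hypothesis to $B$, so the occurrence of the complement clause in $[A]$ is simply dropped.

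The main obstacle is conceptual rather than technical. One must recognise from the start that complements cannot be eliminated one at a time without first strengthening the statement to speak symmetrically about $A$ and $\neg A$; otherwise a recursive call on $\neg B$ in the complement clause would re-introduce precisely the symbol the induction is trying to remove. Once the symmetric hypothesis is in place, the only real computation left is the base-case identity above, which exploits the clopenness of basic open sets in Baire space; the remainder of the argument is routine bookkeeping through the code clauses.
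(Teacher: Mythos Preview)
Your proof is correct and follows essentially the same approach as the paper: push complements inward via De Morgan dualities and exploit the clopenness of basic open sets in $\BS$ to rewrite atoms of the unwanted type as countable intersections or unions of atoms of the other type. The only difference is organizational---the paper does this in two passes (first eliminate complements except at atoms, then replace atoms), whereas you fold both steps into a single induction with the symmetrized hypothesis on $A$ and $\neg A$; the content is the same.
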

\begin{proof}
  Given a $\sigma$-projective code for $A$, one obtains, by a simple
  application of de Morgan's laws, a $\sigma$-projective code in which
  complements are only applied to basic open sets or to basic closed
  sets. 
  Since every basic open set is clopen, basic open sets can be
  replaced by a countable intersection of basic closed sets in the
  projective code; similarly, basic closed sets can be replaced by a
  countable union of basic open sets.
\end{proof}

\begin{definition}\label{def:decgame}
  Suppose $A\subset (\BS)^n$ for some $1\leq n < \omega$ is
  $\sigma$-projective and fix a code $[A]$ for $A$. The \emph{decoding
    game for $A$} (with respect to $[A]$) is a game of length
  $\omega^2$ according to the following rules: In the first $n$
  rounds, which we will call the \emph{preparation}, Players I and II
  start by alternating turns playing $\omega\cdot n$ natural numbers
  to obtain reals $x_1, x_2, \hdots, x_n \in \BS$. Afterwards, they
  proceed according to the $\sigma$-projective code $[A]$ of $A$ via
  the following recursive definition:
\begin{enumerate}
\item If $A$ is basic open, say $A = A_k$, then the game is over,
  i.e., further moves are not relevant. Player I wins if and only if
  $(x_1, x_2, \hdots, x_n) \in A_k$.
\item If $A$ is basic closed, say $A = A_k$, then the game is over,
  i.e., further moves are not relevant. Player I wins if and only if
  $(x_1, x_2, \hdots, x_n) \in A_k$.
\item If $A = \bigcup_i A_i$, so that
  $[A] = \langle [A_0], [A_1], \hdots \rangle$, then Player I plays
  some $k \in \omega$. The game continues from the current play (without the last move, $k$)
  with the rules of the decoding game with respect to $[A_k]$.
\item If $A = \bigcap_i A_i$, so that
  $[A] = \langle 0, [A_0], [A_1], \hdots \rangle$, then Player II
  plays some $k \in \omega$. The game continues from the current
  play (without the last move, $k$) with the rules of the decoding game with respect to $[A_k]$.
\item If $A = (\BS)^k\setminus B$ for some $k \in \omega$, so that
  $[A] = \langle 1, [B] \rangle$, then the game continues from the
  current play with the rules of the decoding game with respect to
  $[B]$, except that the roles of Players I and II are reversed.
  \label{defeq:decgamecomp}
\item If $A = p[B]$, so that $[A] = \langle 2, [B] \rangle$, then
  Player I plays some $y \in \BS$ in $\omega$ moves of the game, where
  the moves of Player II are not relevant. The game continues from the
  current play, together with $y$, using the rules of the decoding
  game with respect to $[B]$. \label{defeq:decgamep}
\item If $A = u[B]$, so that $[A] = \langle 3, [B] \rangle$, then
  Player II plays some $y \in \BS$ in $\omega$ moves if the game,
  where the moves of Player I are not relevant. The game continues
  from the current play, together with $y$, using the rules of the
  decoding game with respect to $[B]$. \label{defeq:decgameu}
\end{enumerate}
\end{definition}
Clause \eqref{defeq:decgamecomp} of the preceding definition will not be used below, but we have defined it in the natural way nonetheless.

\begin{lemma}\label{LemmaSCimpliesSP0}
  Let $A\subset(\BS)^n$ for some $1 \leq n < \omega$ be
  $\sigma$-projective and fix a code $[A]$ for $A$. A player has a
  winning strategy in the game with payoff set $A$ if and only if
  the player has a winning strategy in the decoding game for $A$ with
  respect to $[A]$.
\end{lemma}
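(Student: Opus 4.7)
The plan is to prove, by induction on the $\sigma$-projective code $[A]$, a strengthening of the lemma that handles arbitrary positions in the decoding game. Specifically, I will show that for every tuple $(x_1,\dots,x_n) \in (\BS)^n$, Player I has a winning strategy in the continuation of the decoding game for $A$ with respect to $[A]$ starting from the position in which the preparation has produced $(x_1,\dots,x_n)$ if and only if $(x_1,\dots,x_n) \in A$, and dually Player II has a winning strategy in that continuation if and only if $(x_1,\dots,x_n) \notin A$. The lemma follows at once: the original game with payoff $A$ consists entirely of the preparation phase, so Player I (resp.\ Player II) has a winning strategy there iff she can steer the $\omega\cdot n$ preparatory moves so as to force $(x_1,\dots,x_n)$ into (resp.\ out of) $A$, which by the strengthened statement is exactly equivalent to having a winning strategy in the decoding game.

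The base cases, where $A$ is basic open or basic closed, are immediate since after the preparation the rules declare Player I to win precisely when the produced tuple lies in $A$, with no further moves. For the inductive step I walk through the clauses of Definition \ref{def:decgame}. In the union case $A = \bigcup_i A_i$, Player I's continuation strategies correspond to a choice of an index $k$ together with a strategy for the $[A_k]$-continuation on the same tuple; by the induction hypothesis applied to the strictly smaller code $[A_k]$, such a strategy exists iff $(x_1,\dots,x_n) \in A_k$ for some $k$, i.e., iff $(x_1,\dots,x_n) \in A$. Player II wins the continuation iff she has a winning reply to every $k$, i.e., iff $(x_1,\dots,x_n) \notin A_k$ for all $k$. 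The intersection case is symmetric. The complement clause is immediate from the induction hypothesis applied to $[B]$ with the roles of the players swapped. In the projection case $A = p[B]$, Player II's moves during the $\omega$-block producing $y$ are disregarded, so Player I has a winning strategy in the continuation iff there exists $y \in \BS$ such that she has a winning strategy in the $[B]$-continuation on the extended tuple $(x_1,\dots,x_n,y)$; by the induction hypothesis this amounts to $\exists y\,(x_1,\dots,x_n,y) \in B$, i.e., to $(x_1,\dots,x_n) \in p[B]$. Player II's direction and the dual-projection case $A = u[B]$ are symmetric.

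I do not expect any real conceptual obstacle. The only subtlety worth flagging is the need to state the induction hypothesis in position-relative form, since in Definition \ref{def:decgame} the code recursion always continues from the current play without restarting the preparation; once the claim is phrased for continuations after the preparation, the induction is essentially mechanical.
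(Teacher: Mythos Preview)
Your proposal is correct and follows essentially the same approach as the paper's proof: the paper also argues that after the preparation has produced $(x_1,\dots,x_n)$, Player I wins the continuation by ``playing witnesses to the fact that $(x_1,\dots,x_n)\in A$'' (and conversely that the restriction of a decoding-game strategy to the first $n$ rounds wins $G_A$), which is exactly your position-relative inductive claim stated informally. You have simply made the induction on $[A]$ explicit where the paper sketches one case and says ``the strategy for the other cases is defined similarly.''
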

\begin{proof}
  Assume first that Player I has a winning strategy $\sigma$ in the
  game $G_A$ with winning set $A$. Then the following describes a
  winning strategy for Player I in the decoding game for $A$ with
  respect to $[A]$. In the first $n$ rounds of the game, Player I
  follows the strategy $\sigma$. Since $\sigma$ is a winning strategy
  in $G_A$, the players produce a sequence of reals
  $(x_1, \dots, x_n) \in A$. In the following rounds, Player I follows
  the rules of the decoding game according to $[A]$, playing witnesses
  to the fact that $(x_1, \dots, x_n) \in A$. This means that, e.g., 
  if $[A] = \langle [A_0], [A_1], \hdots \rangle$, i.e.,
  $A = \bigcup_i A_i$, then Player I plays $k \in \omega$ such that
  $(x_1, \dots, x_n) \in A_k$. The strategy for the other cases is
  defined similarly. This yields a winning strategy for Player I in
  the decoding game for $A$ with respect to $[A]$.

  Now assume that Player I has a payoff strategy $\sigma$ in the
  decoding game for $A$ with respect to $[A]$. Then the restriction of
  $\sigma$ to the first $n$ rounds of the game is a winning strategy
  for Player I in the game $G_A$. Similarly for Player II.
\end{proof}

\begin{lemma} \label{LemmaSCimpliesSP}
  Let $A\subset(\BS)^k$ be $\sigma$-projective, for some $1\leq k <\omega$. Then, for every
  $\sigma$-projective code $[A]$ for $A$ in which no complements
  appear, the decoding game given by $[A]$ is simple clopen.
\end{lemma}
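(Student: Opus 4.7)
The plan is to prove the lemma by induction on the complement-free $\sigma$-projective code $[A]$, mirroring each clause of the recursive definition of the decoding game (Definition \ref{def:decgame}) with the corresponding clause in Definition \ref{DefSimple}.

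The base cases are immediate: if $[A] = \langle k \rangle$ and $A = A_k \subset (\BS)^n$ is basic open or basic closed, then $A_k$ is clopen in $(\BS)^n$, and the decoding game is a game of fixed length $\omega\cdot n$ with clopen payoff, matching clause (1) of Definition \ref{DefSimple}. For the union case $[A] = \langle [A_0], [A_1], \ldots \rangle$, the inductive hypothesis supplies simple clopen games $G_{[A_i]}$, and the decoding game for $[A]$ has exactly the shape demanded by clause (2) of Definition \ref{DefSimple} with subgames $G_i := G_{[A_i]}$; the intersection case is symmetric via clause (3).

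The main obstacle is the projection case $[A] = \langle 2, [B] \rangle$ with $A = p[B]$ and $B \subset (\BS)^{n+1}$. By the inductive hypothesis, $G_{[B]}$ is simple clopen; the decoding game for $[A]$ proceeds, after $\omega\cdot n$ moves of preparation plus an $\omega$-move $y$-phase during which Player II's moves are ignored, exactly as the post-preparation of $G_{[B]}$, but with Player I's extracted real $y$ replacing the $(n+1)$-st real normally produced by alternating play. To deal with this, I plan to establish an auxiliary claim by a secondary induction on the simple clopen structure of an arbitrary game $H$ of preparation length at least $\omega\cdot(n+1)$: for the fixed continuous play-remap $\phi$ that rewrites the $(n+1)$-st preparation round so that the alternating real produced in it equals Player I's original $\omega$-sequence in that round, the game $H^\phi$ obtained by composing $H$'s payoff with $\phi$ is again simple clopen. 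In the base case (clause (1)), $H^\phi$ has payoff $\phi^{-1}$ of $H$'s clopen payoff, which is clopen since $\phi$ is continuous, and the preparation length is unchanged, so $H^\phi$ matches clause (1). In the inductive cases (clauses (2) and (3)), the subgames $H_k$ have preparation length at least $\omega\cdot(n+1)$ as well, so the secondary induction hypothesis applies, and $H^\phi$ matches the same clause with subgames $H_k^\phi$. Applied with $H = G_{[B]}$, this yields that the decoding game for $[A]$ is simple clopen; the dual projection case $[A] = \langle 3, [B] \rangle$ is handled analogously with the roles of Players I and II swapped.

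The hardest step is the formulation and verification of the auxiliary claim, especially in checking that the remap $\phi$ commutes cleanly with passage to subgames. The key point making the secondary induction go through is that $\phi$ acts only on the $(n+1)$-st preparation round, which sits below every branching point (every clause (2) or (3) application) in $H$'s structure, so the remap does not interfere with any player's choice or with the identity of the subgame invoked at any branching.
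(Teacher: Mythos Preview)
Your proposal is correct and follows essentially the same approach as the paper: both argue by induction on the code, dispatch the basic and union/intersection clauses directly via Definition~\ref{DefSimple}, and handle the projection clauses via an auxiliary lemma---proved by a secondary induction on the simple clopen structure---asserting that remapping one preparation round of a simple clopen game (what the paper calls $G^{P,k}$, and you call $H^\phi$) yields a simple clopen game. The only cosmetic difference is that the paper states the auxiliary lemma for \emph{all} simple clopen $G$ rather than just those whose branchings lie past round $n+1$; this makes the inductive step slightly cleaner since one need not track that the subgames $H_k$ inherit the preparation-length hypothesis, but your restricted version suffices for the application to $G_{[B]}$.
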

\begin{proof}
  Choose a code $[A]$ for $A$ in which no complements appear. We will
  show by induction on the definition of simple clopen games, that the
  game obtained as in \eqref{defeq:decgamep} or \eqref{defeq:decgameu}
  in Definition \ref{def:decgame} is simple clopen again. 
  This will
  finish the proof as games obtained by clauses (1)-(4) in Definition \ref{def:decgame} are
  clearly simple clopen, by the definition of simple clopen games.
  
  Let us introduce some notation for this proof. If $x\in\omega^\omega$, we write $x^I$ for the sequence of digits of $x$ in even positions, and $x^{II}$ for the sequence of digits of $x$ in odd positions. Thus, if $x$ results from a run of a Gale-Stewart game, then $x^I$ is the sequence of moves of Player I and $x^{II}$ that of Player II.

  So let $G$ be a simple clopen game with payoff set
  $B \subseteq (\BS)^\omega$ and consider the game $G^{P,k}$ which is
  defined as follows. Players I and II start by alternating turns
  playing $\omega \cdot k$ natural numbers to define reals
  $x_1, \dots, x_k \in \BS$. Then players I and II alternate moves to play some real $x_{k+1}$. Finally, Players I and II alternate again to produce reals
  $z_1, z_2, \dots$ so that
  $(x_1, \dots, x_k, x_{k+1}^{I}, z_1, z_2, \dots) \in (\BS)^\omega$ and we say
  that $(x_1, \dots, x_k, x_{k+1}, z_1, z_2, \dots)$ is a winning run
  for Player I in $G^{P,k}$ iff
  $(x_1, \dots, x_k, x_{k+1}^{\mathrm{I}}, z_1, z_2, \dots) \in B$. That means if $B^*$
  denotes the payoff set of $G^{P,k}$, we have
  $(x_1, \dots, x_k, x_{k+1}, z_1, z_2, \dots) \in B^*$ iff
  $(x_1, \dots, x_k, x_{k+1}^{\mathrm{I}}, z_1, z_2, \dots) \in B$. We
  aim to show that $G^{P,k}$ is simple clopen again.

  Suppose first that $G$ is a clopen game of some fixed length
  $\omega \cdot n$, i.e., we consider $G$ as a game of length
  $\omega^2$ but only the first $\omega \cdot n$ moves are relevant
  for the payoff set. Fix some $k\in\omega$. In particular, $B$ is a clopen set in
  $\omega^{\omega \times \omega}$. We can naturally write
  $B = B_0 \times B_1 \times B_2$ for clopen sets
  $B_0 \subseteq (\BS)^k$, $B_1 \subseteq \BS$ and
  $B_2 \subseteq (\BS)^\omega$. By definition, the payoff set for the
  game $G^{P,k}$ is $B_0 \times (B_1)_{\mathrm{I}} \times B_2$, where
  \[(B_1)_{\mathrm{I}} = \{ x \in \BS : x^{\mathrm{I}} \in B_1 \},\]
  which is clopen. Moreover, in $G^{P,k}$ again only the first
  $\omega \cdot n$ moves are relevant, so $G^{P,k}$ is a clopen game of
  fixed length $\omega \cdot n$ and in particular simple clopen.

  Now suppose that $G$ is a simple clopen game obtained by condition
  \eqref{eq:DefSimpleI} in Definition \ref{DefSimple}, i.e., we are
  given simple clopen games $G_i$, $i<\omega$, and after Player I and
  II take turns producing $x_1, \dots, x_k, x_{k+1}$, Player I plays
  some natural number $i$ and the players continue according to the
  rules of $G_i$ (keeping the moves which produced
  $x_1, \dots, x_k, x_{k+1}$). That means for every $i < \omega$, some
  sequence $(x_1, \dots, x_k, x_{k+1}, i, z_1, z_2, \dots)$ is a
  winning run for Player I in $G$ iff
  $(x_1, \dots, x_k, x_{k+1}, z_1, z_2, \dots)$ is a winning run for
  Player I in $G_i$. We can assume inductively that the games $G^{P,k}_i$
  for $i<\omega$ (obtained as above) are simple clopen and we aim to
  show that $G^{P,k}_i$ is simple clopen. Consider the simple clopen game
  $G^*$ which is obtained by applying \eqref{eq:DefSimpleI} in
  Definition \ref{DefSimple} to the games $G^{P,k}_i$ and the natural
  number $k+1$. Suppose Players I and II produce
  $(x_1, \dots, x_k, x_{k+1}, i, z_1, z_2, \dots)$ in a run of
  $G^*$. Then
  \begin{align*}
    (x_1, \dots, x_k, &x_{k+1}, i, z_1, z_2, \dots) \text{ is a
                        winning run for Player I in } G^*\\
    \text{ iff }  (x_1, \dots, x_k, &x_{k+1}, z_1, z_2, \dots) \text{ is a
                                      winning run for Player I in } G_i^{P,k} \\
    \text{ iff } (x_1, \dots, x_k, &x_{k+1}^{\mathrm{I}}, z_1, z_2,
                                     \dots) \text{ is a winning run for Player I
                                     in } G_i\\
    \text{ iff } (x_1, \dots, x_k, &x_{k+1}^{\mathrm{I}}, i, z_1, z_2,
                                     \dots) \text{ is a winning run for Player I
                                     in } G\\
    \text{ iff } (x_1, \dots, x_k, &x_{k+1}, i, z_1, z_2, \dots) \text{ is a
                                     winning run for Player I in } G^{P,k},
  \end{align*}
  where the first equivalence holds by definition of $G^*$, the second
  equivalence holds by inductive hypothesis, the third equivalence by
  choice of $G$, and the fourth equivalence by definition of
  $G^{P,k}$. Hence $G^{P,k}$ and $G^*$ are equal and $G^{P,k}$ is a simple clopen
  game, as desired.

  The argument for simple clopen games obtained by condition
  \eqref{eq:DefSimpleII} in Definition \ref{DefSimple} is analogous.
\end{proof}

With Lemmata \ref{LemmaSCimpliesSP0} and \ref{LemmaSCimpliesSP},  Proposition \ref{PropositionSC} is proved. To finish the proof of Theorem \ref{main} one needs to show the following proposition. This is $(3) \Rightarrow (2)$ and $(1) \Rightarrow (2)$ in Theorem \ref{main}.

\begin{proposition}\label{PropositionSC2}
Suppose either that simple clopen games of length $\omega^2$ are determined or that $\sigma$-projective games of length $\omega$ are determined. Then simple $\sigma$-projective games of length $\omega^2$ are determined.
\end{proposition}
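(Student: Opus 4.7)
The plan is to reduce to the case of simple clopen determinacy. By Proposition \ref{PropositionSimpleClopenEasyProof}, the hypothesis that every $\sigma$-projective game of length $\omega$ is determined already implies simple clopen determinacy of length $\omega^2$. So under either disjunct of the hypothesis we may assume simple clopen determinacy of length $\omega^2$, and it then suffices to show that an arbitrary simple $\sigma$-projective game $G$ of length $\omega^2$ is determined, by exhibiting an equivalent simple clopen game $G'$ and applying the hypothesis to $G'$.

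We construct $G'$ by induction on the construction of $G$ as a simple $\sigma$-projective game (Definition \ref{DefSimple}). If $G$ is a base case, i.e., a game decided after $\omega\cdot n$ moves whose payoff is a $\sigma$-projective set $A\subseteq (\BS)^n$, then we fix a $\sigma$-projective code $[A]$ containing no complements and let $G'$ be the decoding game for $A$ with respect to $[A]$; the preparation phase of the decoding game (of length $\omega\cdot n$) is identified with the first $\omega\cdot n$ moves of $G$. Lemma \ref{LemmaSCimpliesSP} gives that $G'$ is simple clopen, and Lemma \ref{LemmaSCimpliesSP0} gives that $G$ and $G'$ are equivalent.

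If $G$ is obtained from simple $\sigma$-projective games $G_i$, $i\in\omega$, and some $n\in\omega$ via clause \eqref{eq:DefSimpleI} (respectively \eqref{eq:DefSimpleII}) of Definition \ref{DefSimple}, then inductively let $G'_i$ be a simple clopen game equivalent to $G_i$, and let $G'$ be obtained from the $G'_i$ via the same clause applied to the same ordinal $\omega\cdot n$. Then $G'$ is simple clopen by Definition \ref{DefSimple}, and equivalent to $G$: a winning strategy in the outer layer of $G$ combined with the inductive equivalences $G_i\equiv G'_i$ at each branch $i$ transfers to a winning strategy in $G'$, and vice versa. The hypothesis then yields determinacy of $G'$, and hence of $G$.

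The main substance of the proof has already been extracted in Lemmata \ref{LemmaSCimpliesSP} and \ref{LemmaSCimpliesSP0}, which are stated for payoff sets $A\subseteq (\BS)^n$ with $n\in\omega$. What remains is only the routine verification that equivalence of games is preserved under the simple-game constructors of Definition \ref{DefSimple}, together with the bookkeeping that identifies the $\omega\cdot n$ preparation moves of each decoding game with the corresponding $\omega\cdot n$ moves of $G$ that lead into the base case. This confirms the authors' earlier remark that the proof of Proposition \ref{PropositionSimpleClopenEasyProof} ``easily adapts'' to yield simple $\sigma$-projective determinacy from the same hypothesis.
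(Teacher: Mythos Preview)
Your approach matches the paper's first suggested method (reduce a simple $\sigma$-projective game to a simple clopen game by grafting decoding games onto the leaves), and the overall shape is right. However, there is a genuine gap in the inductive step.

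You claim that ``equivalence of games is preserved under the simple-game constructors of Definition \ref{DefSimple}''. This is false as a general statement. For a counterexample, take $n=1$ and clause \eqref{eq:DefSimpleII}: let $G_0=G'_0$ be the length-$\omega$ game ``Player I wins iff $x(0)=0$'', let $G_1$ be ``Player I wins iff $x(0)=0$'' and $G'_1$ be ``Player I wins iff $x(0)=1$'', and let $G_i=G'_i$ be trivially won by Player I for $i\geq 2$. Then $G_i\equiv G'_i$ for every $i$ (Player I wins each), but in the compound game $G$ Player I wins by playing $x(0)=0$, whereas in $G'$ Player II wins by choosing $i=1$ if $x(0)=0$ and $i=0$ otherwise. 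The point is that the first $\omega\cdot n$ moves are \emph{shared} between the outer layer and the inner games $G_i$, so knowing only that $G_i$ and $G'_i$ have the same global winner is not enough; you need to know that $(G_i)_p$ and $(G'_i)_p$ have the same winner for every partial play $p$ that can occur in the shared prefix.

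The fix is to strengthen the inductive hypothesis accordingly: prove by induction on the construction of $G$ that for every partial play $p$ of $G$ occurring before the decoding phase begins, $(G)_p \equiv (G')_p$. In the base case this is exactly the observation (implicit in the proof of Lemma \ref{LemmaSCimpliesSP0}) that the decoding game for $A$ after preparation $p$ is equivalent to the game with payoff $A_p$; in the inductive case the stronger hypothesis is precisely what is needed and is clearly preserved. Alternatively, bypass the induction entirely and describe $G'$ directly as the paper does: play $G$ until a position is reached at which the outcome is determined by membership of the play so far in a $\sigma$-projective set $A$, then continue with the post-preparation part of the decoding game for $A$. Equivalence of $G$ and $G'$ then follows from a single application of (the pointwise form of) Lemma \ref{LemmaSCimpliesSP0} at each leaf.
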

Proposition \ref{PropositionSC2} can be proved directly either by the method of the proof of Proposition
\ref{PropositionSC}, or by that of Proposition \ref{PropositionSimpleClopenEasyProof}. In the second case, one need only carry out a straightforward adaptation.
In the first case,
a simple $\sigma$-projective game $G$ is reduced
to the simple clopen game in which two players play the game $G$,
producing a sequence $x \in (\omega^\omega)^n$ for some
$n\in\omega$ such that there is a $\sigma$-projective set
$A \subset (\omega^\omega)^n$ with the property that Player I
wins $G$ iff $x \in A$, no matter how the players continue playing the
rest of $G$. After this, the players play the decoding game for $A$ to
determine who the winner is in a clopen way.\\

We close this section with a useful
characterization of the $\sigma$-projective sets, although it will not be used.

\begin{proposition}[Folklore]
  A set $A\subset\mathbb{R}$ is $\sigma$-projective if and only if
  it belongs to $L_{\omega_1}(\mathbb{R})$.
\end{proposition}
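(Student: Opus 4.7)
The plan is to prove both inclusions by transfinite induction, using $\sigma$-projective codes for the forward direction ($\Rightarrow$) and the $L_\alpha(\mathbb{R})$-hierarchy for the converse ($\Leftarrow$).

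For the forward direction, I would induct on a $\sigma$-projective code $[A]$ (itself coded by a real) to show that $A \in L_\gamma(\mathbb{R})$ for some countable $\gamma$ definable from $[A]$. Basic open and closed sets lie in $L_\omega(\mathbb{R})$, since the enumeration $\{A_n\}$ is definable from $\mathbb{R}$ and each member is pinned down by a single natural number. Complementation and projection of a set of reals in $L_\alpha(\mathbb{R})$ yield sets in $L_{\alpha+1}(\mathbb{R})$, as both are first-order operations. For countable unions, the inductive hypothesis gives $A_n \in L_{\alpha_n}(\mathbb{R})$ uniformly in $n$ with $\alpha_n < \omega_1$; regularity of $\omega_1$ yields $\beta := \sup_n \alpha_n < \omega_1$, the sequence $\langle A_n : n < \omega\rangle$ is definable from $[A]$ over $L_\beta(\mathbb{R})$ and so lies a few levels higher, and so does its union.

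For the converse I would induct on $\alpha < \omega_1$, with a strengthened hypothesis that lets the induction proceed uniformly. Namely, for each real $c$ coding a well-order of type $\alpha$, one constructs a surjection $\pi_\alpha^c : \mathbb{R} \to L_\alpha(\mathbb{R})$ and a satisfaction predicate
\[ \mathrm{Sat}_\alpha^c(\ulcorner \varphi \urcorner, x) \iff L_\alpha(\mathbb{R}) \models \varphi(\pi_\alpha^c(x)), \]
which is $\sigma$-projective in $c$. Granted this, any $B \in L_\alpha(\mathbb{R}) \cap \mathcal{P}(\mathbb{R})$ defined over $L_\alpha(\mathbb{R})$ by $\varphi$ with real parameter $z_0$ takes the form $\{y : \mathrm{Sat}_\alpha^c(\ulcorner \varphi \urcorner, \langle z_0, y \rangle)\}$ and so is $\sigma$-projective. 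Successor steps use Tarski recursion: one codes new parameters via $\pi_\alpha^c$, and $\mathrm{Sat}_{\alpha+1}^c$ becomes a boolean combination of projections of $\mathrm{Sat}_\alpha^c$, preserving $\sigma$-projectivity. At a limit $\lambda$, one extracts a cofinal $\omega$-sequence $\langle \lambda_n \rangle$ from $c$ together with derived codes $c_n$ for each $\lambda_n$, amalgamates the $\pi_{\lambda_n}^{c_n}$, and expresses $\mathrm{Sat}_\lambda^c$ as a countable union of projections of the earlier predicates.

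The main obstacle is the bookkeeping needed to verify that $\pi_\alpha^c$ and $\mathrm{Sat}_\alpha^c$ depend $\sigma$-projectively on $c$, with enough uniformity that every operation invoked at successor and limit stages lands inside the $\sigma$-projective pointclass. Once this uniformity is established the argument is a routine unfolding of Tarski recursion, and no further ideas are required.
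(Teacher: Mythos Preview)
Your proposal is correct and follows essentially the same approach as the paper: the forward direction observes that $L_{\omega_1}(\mathbb{R})$ is closed under the relevant operations, and the converse constructs by induction on $\alpha<\omega_1$ a $\sigma$-projective coding of $L_\alpha(\mathbb{R})$ together with its satisfaction relation. The only notable difference is that you are more explicit about the uniformity in a real code $c$ for $\alpha$, which is precisely what is needed at limit stages (the paper simply takes a disjoint countable union of the earlier codes without dwelling on why the resulting set is still $\sigma$-projective); your bookkeeping with $\pi_\alpha^c$ and $\mathrm{Sat}_\alpha^c$ makes this point cleanly.
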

\begin{proof}
  Clearly, $L_{\omega_1}(\mathbb{R})$ is closed under countable sequences and
  $\mathcal{P}(\mathbb{R})\cap L_{\omega_1}(\mathbb{R})$ is closed under projections.

  Conversely, by induction on $\alpha<\omega_1$, one sees that $L_\alpha(\mathbb{R})$ and the satisfaction relation for $L_\alpha(\mathbb{R})$
  are coded by $\sigma$-projective
  sets of reals:
\begin{enumerate}
\item $L_0(\mathbb{R}) = \mathbb{R} = V_{\omega+1}$ is coded by
  itself. The satisfaction relation $S_0$ for $V_{\omega+1}$ is given by
  \[S_0(\phi, \vec a) \leftrightarrow V_{\omega+1}\models\phi(\vec
    a),\] for $\phi$ a formula in the language of set theory and
  $\vec a$ a finite sequence of reals.  Since every formula in the
  language of set theory is in the class $\Sigma_n$ for some $n$,
  $S_0$ belongs to the pointclass $\bigcup_{n<\omega}\Sigma^1_n$ (i.e.,
  the pointclass of countable unions of projective sets) and is thus
  $\sigma$-projective.
\item Suppose that a $\sigma$-projective code $C_\alpha$ for
  $L_\alpha(\mathbb{R})$ has been defined and that a
  $\sigma$-projective satisfaction predicate $S_\alpha$ for
  $L_{\alpha}(\mathbb{R})$ relative to $C_\alpha$ has been defined.  Suppose that
  $\vec a \in (C_\alpha)^n$ and
\[\phi = \exists y_1\, \forall y_2, \hdots Q y_m\, \phi_0(x_1,\hdots,
  x_n, y_1,\hdots, y_m)\]
  (where $Q$ is a quantifier and $\phi_0$ is $\Delta_0$) is a $\Sigma_m$-formula with $n$ free
variables. Then, letting
\[C^{\phi, \vec a}_\alpha = \big\{x\in\mathbb{R}: \exists y_1 \in
  C_\alpha\, \forall y_2\in C_\alpha\,\hdots\, Q y_m \in C_\alpha\,
  S_\alpha(\phi_0,\vec a, y_1,\hdots, y_m)\big\},\] a
$\sigma$-projective code $C_{\alpha+1}$ for $L_{\alpha+1}(\mathbb{R})$
can be defined by the disjointed union
\[C_{\alpha+1} = C_\alpha \, \dot\cup \, \{\hat C^{\phi,\vec
    a}_\alpha:\phi\text{ is a formula and }\vec a \in
  (C_\alpha)^n\},\] where $\hat C^{\phi,\vec a}_\alpha$ is a real
number coding\footnote{E.g., it could be the tuple
  $(\phi,\vec a,\alpha)$.} the set $C^{\phi,\vec a}_\alpha$.  A
satisfaction relation $S_{\alpha+1}$ can be defined from this: for
atomic formulae, we set
\begin{align*}
  S_{\alpha+1}(\cdot\in\cdot, x,y) \leftrightarrow 
  &\big(x,y\in C_\alpha \wedge S_\alpha(\cdot\in\cdot, x,y)\big) \vee\\
  &\big(x \in C_\alpha\wedge \exists \phi\,\exists\vec a \subset (C_\alpha)^{\text{lth}(a)}\,y=\hat C^{\phi,\vec a}_\alpha \wedge S_\alpha(\phi,\vec a, x)\big).
\end{align*}
For other formulae, this is done as above, so the satisfaction
relation $S_{\alpha+1}$ is seen to belong to the pointclass
$\bigcup_{n<\omega}\Sigma^1_n(C_{\alpha+1}, S_\alpha)$ and is thus by
using the inductive hypothesis $\sigma$-projective.
\item Suppose that a $\sigma$-projective code $C_\alpha$ for
  $L_\alpha(\mathbb{R})$ has been defined and that a
  $\sigma$-projective satisfaction predicate $S_\alpha$ for
  $L_{\alpha}(\mathbb{R})$ has been defined for every
  $\alpha<\lambda$, where $\lambda$ is a countable limit ordinal. Then
  $C_\lambda$ can be defined as the disjoint (countable) union of
  $C_{\alpha+1}\setminus C_\alpha$, for $\alpha<\lambda$. Thus
  $C_\lambda$ is $\sigma$-projective. The satisfaction relation
  $S_\lambda$ can be defined as above.
\end{enumerate}  
  This completes the proof.
\end{proof}

\section{Determinacy from large cardinals}\label{sec:simpleclopen}

In this section, we prove level-by-level that the existence of certain iterable
inner models with Woodin cardinals implies simple clopen determinacy of length $\omega^2$, which in turn implies $\sigma$-projective determinacy of length $\omega$. A different proof of this latter result can be found in \cite{Ag18}, but the proof we give here has the advantage that it requires almost no inner-model-theoretic background. Another advantage is that it easily generalizes to yield further results (cf. the next section).

The idea of this proof is to enhance a simple clopen game of length $\omega^2$ by presenting each player with a fine-structural model that can be manipulated to obtain information about the game. This method of proof is due to Neeman \cite{Ne04}; the difference in our context is that the models considered will contain many partial measures and, in addition to taking iterated ultrapowers, we will allow the players to remove end-segments of their models during the game so as to make the measures total and obtain new information. Although this determinacy proof could have been framed directly in terms of (the decoding game for) $\sigma$-projective sets, it seems somewhat more natural to consider simple clopen games of length $\omega^2$ instead and proceed by induction on the game rank.

Before we start with the proof, let us recall the relevant inner model
theoretic notions we will need. In what follows, we will work with
premice and formulae in the language of relativized premice
$\lpm = \{\dot \in, \dot E, \dot F, \dot x\}$, where $\dot E$ is a
predicate for a sequence of extenders, $\dot F$ is a
predicate for an extender, and $\dot x$ is a predicate for a real
number over which we construct the premouse.

For some real $x$, a \emph{potential $x$-premouse} is a model
$M = (J_\eta^{\vec{E}}, \in, \vec{E} \upharpoonright \eta, E_\eta,
x),$ where $\vec{E}$ is a fine extender sequence\footnote{See
  Definition $2.4$ in \cite{St10}, which goes back to Section $1$ in
  \cite{MS94} and \cite{SchStZe02}.} and $\eta$ an ordinal or $\eta = \Ord$ (see
Section $2$ in \cite{St10} for details). We say that such a potential
$x$-premouse $M$ is \emph{active} iff $E_\eta \neq \emptyset$. If
$\nu \leq \eta$, we write
$M | \nu = (J_\nu^{\vec{E}}, \in, \vec{E} \upharpoonright \nu, E_\nu,
x)$ for the corresponding initial segment of $M$. A potential
$x$-premouse $M$ is called an \emph{$x$-premouse} if every proper
initial segment of $M$ is $\omega$-sound. If it does not lead to
confusion, we will sometimes drop the $x$ and just call $M$ a
premouse. Informally, an \emph{$x$-mouse} is an iterable
$x$-premouse. We will avoid this term as the notion of iterability is
ambiguous, but $\omega_1$-iterability suffices for all our
arguments.\footnote{We will only need \emph{weak iterability}, in the sense of \cite{Ne04}.} Here we say that an $x$-premouse $M$ is
\emph{$\omega_1$-iterable} if it is iterable for countable stacks of
normal trees of length $< \omega_1$ (see Section $4.1$ of \cite{St10}
for a formal definition).

We consider premice belonging to various \emph{smallness classes}:

\begin{definition}
  Let $M$ be an $\omega_1$-iterable premouse and
  $\delta \in M \cap \Ord$.
\begin{enumerate}
\item We say $M$ is of class $S_0$ above $\delta$ if $M$ is a proper
  class or active.
\item We say $M$ is of class $S_{\alpha+1}$ above $\delta$ if there is
  some ordinal $\delta_0>\delta$ and some $N\trianglelefteq M$ with
  $\delta_0 < N \cap \Ord$ such that $N$ is of class $S_\alpha$ above
  $\delta_0$ and $\delta_0$ is Woodin in $N$.
\item Let $\lambda$ be a countable limit ordinal. We say $M$ is of
  class $S_\lambda$ above $\delta$ if $\lambda<\omega_1^M$ and $M$ is
  of class $S_\alpha$ above $\delta$ for all $\alpha<\lambda$.
\end{enumerate}

Moreover, we say $M$ is of class $S_\alpha$ if it is of class
$S_\alpha$ above $0$.
\end{definition}

\begin{example} Let $n \geq 1$ and recall that a premouse $M$ is
  called $n$-small if for every $\kappa$ which is a critical point of
  an extender on the sequence of extenders of $M$,
  $M|\kappa \nvDash \text{``there are }n\text{ Woodin
    cardinals''}$. Moreover, we say $M$ is $0$-small if $M$ is an
  initial segment of $L$. Let $M_n^\sharp$ denote the unique
  countable, sound, $\omega_1$-iterable premouse which is not
  $n$-small, but all of whose proper initial segments are $n$-small,
  if it exists and is unique. If $M_n^\sharp$ exists and is unique for
  all $n\in\omega$, then -- in particular -- every $x\in\BS$ has a
  sharp. Let $N_\omega^\sharp$ denote the smallest active premouse
  extending
  \[\bigcup_{n\in\omega}M_n^\sharp.\]
  Then $N_\omega^\sharp$ is of class $S_\omega$. If there is a
  premouse of class $S_{\omega+1}$, then it contains $N_\omega^\sharp$
  and in fact $N_\omega^\sharp$ is countable in it.
\end{example}

Note that if $M$ is a premouse of class $S_\alpha$, then $M$ is aware
of this, since $\alpha$ is countable in $M$. If $M$ is of class
$S_\alpha$ and not a proper class, then $M$ is active and we can
obtain a proper class model by iterating the
active extender of $M$ and its images out of the universe. By convention, we shall say that this proper class model is also of class $S_\alpha$.

We recall the definition of the game rank of a simple clopen game. 
If $G$ is a game of fixed length $\omega\cdot n$, then
$\gr(G) = n$. If $G$ is obtained from games $G_0, G_1, \hdots$, and
from an ordinal $\omega \cdot n$ as in Definition \ref{DefSimple}, we
let
\[ \gr(G) = \sup\{\gr(G_i)+\omega:i\in\omega\} + n. \] 

\begin{remark}
For every simple game $G$ and initial play $p$ of $G$, let $G_p$
  denote the rest of the game $G$ after $p$ has been played.
  Let
  $p_0, p_1, \dots$ be a sequence of initial plays of $G$ starting
  with the empty sequence $p_0 = \emptyset$ such that $p_{i+1}$
  end-extends $p_i$, and either $\gr(G_{p_i})$ is a successor ordinal
  with $\gr(G_{p_i}) = \gr(G_{p_{i+1}}) +1$ or $\gr(G_{p_i})$ is a
  limit ordinal. Then this sequence has to be of finite length $k+1$
  and we can choose $k$ large enough such that $\gr(G_{p_k}) = 1$.
\end{remark}


\begin{theorem}\label{prop:simpleclopen}
  Suppose that $\gamma < \omega_1$ and for every $y \in \BS$ there is
  an $x \geq_T y$ such that there is a proper class $x$-premouse of
  class $S_{\gamma}$ which is a model of $\ZFC$. Then every simple
  clopen game $G$ such that $\gr(G)\leq\gamma$ is determined.
\end{theorem}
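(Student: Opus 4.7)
The plan is to prove the theorem by transfinite induction on $\gamma<\omega_1$, adapting Neeman's method of enhancing a game with an auxiliary iteration game on a fine-structural model, with a new \emph{dropping} feature to exploit the partial extenders stratified in an $S_\gamma$-premouse. The base cases are immediate: when $\gamma=0$ the game is vacuous; when $\gamma=1$ it is a clopen game of length $\omega$, determined by Gale-Stewart; and when $\gamma=n<\omega$ it is a clopen game of length $\omega\cdot n$, covered by Neeman's theorem from $n$ Woodin cardinals (supplied by the hypothesis, since a premouse of class $S_n$ carries $n$ stacked Woodin cardinals). For $\gamma$ a countable limit ordinal, a simple clopen game $G$ of rank $\gamma$ begins with one player selecting among countably many subgames of strictly smaller rank; since any premouse of class $S_\gamma$ is also of class $S_{\gamma'}$ for every $\gamma'<\gamma$, the hypothesis descends to every smaller ordinal, the inductive hypothesis determines each subgame, and determinacy of the countable disjunction or conjunction follows in the standard way.

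The substantive work is in the successor step, say $\gr(G)=\alpha+1$. Here I would introduce an auxiliary iteration game played in parallel with the first round of $G$, on a proper-class $x$-premouse $\mathcal N$ of class $S_\gamma$ obtained from the hypothesis, with $x$ coding the relevant initial data (in particular, an enumeration of the subgame structure of $G$). Following the template of \cite{Ne04}, during the first $\omega$ moves of $G$ the two players cooperate to build a maximal iteration tree on $\mathcal N$ using extenders with critical points below the topmost Woodin cardinal $\delta$ of a suitable $S_\alpha$-initial segment of $\mathcal N$. The iterate $\mathcal N^\ast$ produced at the end of this round, being itself of class $S_\alpha$ over a real tied to the initial round of $G$, witnesses -- via the inductive hypothesis applied to the residual game $G_p$, which has rank $\leq\alpha$ by the remark preceding the theorem -- a winning strategy for one of the players in $G_p$; composing this with the iteration strategy produces a winning strategy in $G$.

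The main obstacle, and the essential departure from Neeman's construction, is the presence of \emph{many} partial extenders in $\mathcal N$ corresponding to the stack of Woodin cardinals in the $S_\gamma$-hierarchy. To access the successive strata the players must be allowed to drop \emph{gratuitously} to an initial segment of the current iterate on which the next layer of extenders becomes total. The crucial point is that such drops are required only at the rank-transition moments of $G$, where either the game enters a limit-rank subgame or a successor-rank subgame's rank decreases by one; by the remark immediately preceding the theorem, any play of $G$ encounters only finitely many such transitions before reaching rank $1$. This finiteness of drops is what keeps the auxiliary iteration within reach of weak $\omega_1$-iterability for countable stacks of normal trees, and it is the technical heart of the proof. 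Managing the bookkeeping of drops consistently with the rank structure of $G$, and ensuring that the auxiliary iteration strategies at each stratum compose coherently with the inductive strategies in the residual $G_p$ to yield a single global winning strategy for one of the two players, is where the argument will demand the most care.
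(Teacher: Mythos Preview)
Your high-level picture---Neeman's auxiliary-iteration method plus finitely many gratuitous drops at the limit-rank transitions---is correct, but the induction you set up has a genuine gap. In the successor step you want to apply Neeman's theorem (Theorem~\ref{thm:Neeman}) during the first round; that requires a $\Col(\omega,\delta)$-name \emph{in the model} $M$ for the set of first-round plays $p$ after which Player~I should win $G_p$, together with a reason why the model's verdict on membership in that name agrees with $V$'s. Your inductive hypothesis is only ``games of rank $\leq\alpha$ are determined in $V$'', which supplies neither. Nor can you fall back on a complexity bound: the set $\{p:\text{Player~I wins }G_p\}$ is a priori only $\sigma$-projective, and $\sigma$-projective determinacy is precisely what the whole enterprise is building toward. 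The sentence ``composing this with the iteration strategy produces a winning strategy in $G$'' hides exactly the step that does not follow from the hypothesis as stated.

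The paper does not induct on the determinacy statement. Instead it defines, by recursion on $\gr(G_p)$ and simultaneously for all relevant models $M$, names $\dot W^p_{\mathrm I}(M)$, $\dot W^p_{\mathrm{II}}(M)$ and formulae $\phi^p_{\mathrm I}$, $\phi^p_{\mathrm{II}}$ that are entirely internal to $M$: at rank~$2$ they refer to winning a clopen length-$\omega$ game (absolute); at successor ranks they are the Neeman formulae applied to the lower-rank names; at limit ranks they quantify over minimal active initial segments of the appropriate $S$-class---this is where the dropping really lives, not inside a single application of Theorem~\ref{thm:Neeman}. One then proves that these names are preserved under iteration embeddings (Claim~\ref{cl:nameselememb}) and that in any minimal $S_\alpha$-model one of $\phi^\emptyset_{\mathrm I}$, $\phi^\emptyset_{\mathrm{II}}$ must hold and yields an actual winning strategy (Claim~\ref{cl:main}). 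Determinacy of $G$ emerges only at the end, from the trichotomy in Neeman's theorem applied at the top level; there is no appeal to determinacy of $G_p$ along the way.
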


In the proof, we are going to use premice of class $S_\gamma$ to apply
the following theorem of Neeman multiple times. 

\begin{theorem}[Neeman, Theorem 2A.2 in \cite{Ne04}]\label{thm:Neeman}
There are binary formulae $\phi_{\mathrm{I}}$ and
  $\phi_{\mathrm{II}}$ in the language of set theory such that the following hold for any transitive, weakly iterable\footnote{As defined in
    Appendix A, Iterability in \cite{Ne04}. Note that
    $\omega_1$-iterability implies weak iterability.} premouse $M$ which is a model of
  $\ZFC$:
  
  Let $\delta<\omega_1$ be a Woodin cardinal in $M$ and let
  $\dot A\in M$ and $\dot B\in M$ be $\Col(\omega,\delta)$-names for a
  subset of $\BS$. Then,
\begin{enumerate}
\item If $M\models\phi_{\mathrm{I}}[\delta, \dot A]$, there is a strategy
  $\sigma$ for Player I in a game of length $\omega$ such that
  whenever $x$ is a play by $\sigma$, there is a non-dropping iterate $N$ of $M$ with embedding $j$
  and an $N$-generic $g\subset\Col(\omega,j(\delta))$ such that
  $x\in N[g]$ and $x \in j(\dot A)[g]$.
\item If $M\models\phi_{\mathrm{II}}[\delta, \dot B]$, there is a strategy
  $\tau$ for Player II in a game of length $\omega$ such that whenever
  $x$ is a play by $\tau$, there is a non-dropping iterate $N$ of $M$ with embedding $j$ and an
  $N$-generic $g\subset\Col(\omega,j(\delta))$ such that $x\in N[g]$ and
  $x \in j(\dot B)[g]$.
\item Otherwise, there is an $M$-generic $g\subset\Col(\omega,\delta)$
  and an $x\in M[g]$ such that $x\not\in \dot A[g]$ and
  $x\not\in \dot B[g]$.
\end{enumerate}
\end{theorem}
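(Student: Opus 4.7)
The plan is to prove Neeman's theorem by a generalization of the Martin-Steel argument for projective determinacy, using the Woodin cardinal $\delta$ to implement an \emph{integrated game} that couples plays in $\omega^\omega$ with a genericity iteration of $M$. The formulae $\phi_{\mathrm{I}}$ and $\phi_{\mathrm{II}}$ will be designed to internalize, inside $M$, the existence of a winning quasi-strategy in this auxiliary game. Concretely, I would work with Woodin's extender algebra $\mathbb{B}_\delta \in M$ of size $\delta$ and consider, internally to $M$, the game in which two players produce a real $x$ viewed as a filter on $\mathbb{B}_\delta$; using the Boolean-valued semantics over $\mathrm{Col}(\omega,\delta)$ and the fact that $\mathbb{B}_\delta$ regularly embeds into $\mathrm{Col}(\omega,\delta)$ after an iteration, one can formulate $\phi_{\mathrm{I}}[\delta,\dot A]$ to assert that for every initial condition, Player~I has an internal strategy forcing the play to land in $\dot A$, and $\phi_{\mathrm{II}}[\delta,\dot B]$ symmetrically.

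The second step is to convert an internal strategy into an external one, assuming $M\models\phi_{\mathrm{I}}[\delta,\dot A]$. The construction proceeds via Neeman's \emph{integration} procedure: as Player~II plays moves $y(2k+1)$ in the real game, I maintain in parallel a countable iteration tree $\mathcal{T}$ on $M$, and whenever the current finite initial segment of $y$ fails to realize some extender-algebra axiom present in the current model along $\mathcal{T}$, I apply the corresponding extender. Player~I's external moves $y(2k)$ are dictated by the internal strategy witnessed by $\phi_{\mathrm{I}}$ transferred along the embedding. Weak iterability of $M$ provides cofinal branches, and after $\omega$ steps the direct limit $N$ is wellfounded with embedding $j:M\to N$. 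By design $y$ is $\mathbb{B}_{j(\delta)}$-generic over $N$, hence $\mathrm{Col}(\omega,j(\delta))$-generic over $N$, and the elementarity of $j$ combined with $\phi_{\mathrm{I}}$ guarantees $y\in j(\dot A)[g]$ for the induced generic $g$. The argument for $\phi_{\mathrm{II}}$ is symmetric, with Player~II playing the extender-algebra axioms and Player~I's real moves absorbed via the iteration.

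For the third alternative, assume $M\models\neg\phi_{\mathrm{I}}[\delta,\dot A]\wedge\neg\phi_{\mathrm{II}}[\delta,\dot B]$. The negation of the two formulae translates, via the Boolean-valued semantics, into the existence of conditions $p,q\in\mathrm{Col}(\omega,\delta)^M$ that force in $M$ that neither Player has a winning strategy for the respective payoff. By completeness of $\mathrm{Col}(\omega,\delta)$ inside $M$ one can diagonalize against the countably many dense sets of $M$ to produce an $M$-generic $g\subseteq\mathrm{Col}(\omega,\delta)$ together with $x\in M[g]$ that avoids both $\dot A[g]$ and $\dot B[g]$.

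The hard part will be the precise formulation of $\phi_{\mathrm{I}}$ and $\phi_{\mathrm{II}}$ and the verification that the integration step preserves the required invariants. One needs the internal formula to be expressible with quantifiers only over $M$ yet to correctly predict the \emph{external} behavior of the iterated structures; this hinges on absoluteness of the extender-algebra forcing relation between $M$ and its iterates, and on ensuring that the iteration produced in Step~2 is always a legal tree on the fine extender sequence, so that weak iterability gives a wellfounded direct limit. This is precisely the technical content of Neeman's proof in \cite{Ne04} and is where essentially all the work of the argument resides.
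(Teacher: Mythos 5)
This statement is Neeman's Theorem 2A.2, which the paper quotes and uses as a black box; there is no proof of it in the paper, so there is nothing internal to compare your argument against. Judged on its own, your sketch is a reasonable outline of the standard strategy: couple the real game with an iteration game on $M$, use the Woodin cardinal $\delta$ to make the emerging real generic over a non-dropping iterate (via the extender algebra, as you propose, or via Neeman's own genericity-iteration method aimed directly at $\Col(\omega,\delta)$), shift the internal strategy along the iteration embedding, and invoke weak iterability for wellfounded direct limits. You also correctly identify where essentially all the work lies.

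Two places where the sketch is thinner than it can afford to be. First, in alternative (3) it does not suffice to note that both formulae fail and then ``diagonalize against dense sets'': the real content is that the auxiliary games underlying $\phi_{\mathrm{I}}$ and $\phi_{\mathrm{II}}$ are closed/open for the relevant player and hence determined inside $M$, so $\neg\phi_{\mathrm{I}}$ and $\neg\phi_{\mathrm{II}}$ yield internal winning strategies for the \emph{opponents}; one must face these two defensive strategies off against each other, interleaved with a generic enumeration of the dense sets, to manufacture the single $x\in M[g]$ escaping both $\dot A[g]$ and $\dot B[g]$. Merely knowing that some conditions force ``no winning strategy'' does not by itself produce $x$. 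Second, the step from ``$x$ is generic over $N$ for the image of the extender algebra'' to ``$x\in N[g]$ for a $\Col(\omega,j(\delta))$-generic $g$ with $x\in j(\dot A)[g]$'' requires both the absorption of the extender-algebra extension into the collapse extension and an argument that the $\Col(\omega,\delta)$-name $\dot A$ is evaluated coherently after that absorption; this is precisely why Neeman formulates the auxiliary game directly in terms of $\Col(\omega,\delta)$-conditions and names rather than routing through the extender algebra. Both gaps are fixable, and the remaining technical apparatus (pivots, mirroring, legality of the iteration trees) is, as you say, the content of \cite{Ne04}.
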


We now proceed to:

\begin{proof}[Proof of Theorem \ref{prop:simpleclopen}]
  We may as well assume that $\gamma$ is infinite, since otherwise $G$
  has fixed length ${<}\omega^2$, so it is determined by the results
  of \cite{Ne95} and \cite{Ne02} (see also the introduction of
  \cite{Ne04}).

  Let $G$ be a simple clopen game with $\gr(G) \leq \gamma$, say
  $\gr(G) = \alpha$. We assume that $\alpha$ is a successor ordinal,
  since the limit case is similar. Let $r \in\BS$ code all parameters
  used in the definition of $G$. We may as well assume that $r$
  belongs to the Turing cone given by the hypothesis of the
  theorem.

  To each non-terminal play $p$ of $G$ corresponds a game
  \[ G_p := \text{ the game $G$ after $p$ has been played.} \]
  Clearly, $G_p$ is a simple game and $\gr(G_p) \leq\alpha$. Given $y \geq_T r$, a $y$-premouse $M$, we select $\delta\in\Ord^M$ and define formulae $\phi^p_{\mathrm{I}}$
  and $\phi^p_{\mathrm{II}}$, and two sets $\dot W^p_{\mathrm{I}}$ and
  $\dot W^p_{\mathrm{II}}$, each of which is either a $\Col(\omega,\delta)$-name for a set of real numbers or a set of natural numbers. The definition is by
  induction on $\gr(G_p)$ (not on $p$!). Formally, these names and
  sets of course depend on the model $M$ in which they are defined, so
  we sometimes write $\dot W^p_{\mathrm{I}}(M)$ and
  $\dot W^p_{\mathrm{II}}(M)$ to make this explicit. But for
  simplicity and readability we will omit this whenever it does not
  lead to confusion. 

  At the same time, we show that there are names, names for names,
  etc., for these sets such that their interpretation with respect to
  generics $g_0, \dots, g_n$ for some $n\in\omega$ for collapsing Woodin cardinals of $M$ is
  a set of the same form with respect to the model $M[g_0]\dots[g_n]$
  instead of $M$. 
  More precisely, if $M$ is a model as above and $M$ has (at least) $n+1$ Woodin cardinals, then let
  $\mathbb{P}_{n} = \mathbb{P}(\delta_0, \dots, \delta_n)$ be the
  forcing iteration of length $n+1$ collapsing the ordinals
  $\delta_0, \delta_1, \dots, \delta_n$ to $\omega$ one after the
  other, where $\delta_0$ is the least Woodin cardinal in $M$ and
  $\delta_{i+1}$ is the least Woodin cardinal in $M$ above $\delta_i$
  for all $0 \leq i < n$, i.e.,
  $\mathbb{P}_0 = \Col(\omega,\delta_0)$ and for all $0 \leq i < n$,
  \[ \mathbb{P}_{i+1} = \mathbb{P}_{i} *
    \Col(\omega,\delta_{i+1})^{\check{} },\] where
  $\Col(\omega,\delta_{i+1})^{\check{} } \in M$ is the canonical
  $\mathbb{P}_i$-name for $\Col(\omega,\delta_{i+1})$. This is of course equivalent to the product and to the $\Col(\omega,\delta_n)$, but we are interested in the step-by-step collapse.
  
  We consider nice $\mathbb{P}_n$-names $\dot p$ for finite sequences of reals with the property that the game rank of $G_{\dot p}$ is decided by the empty condition.\footnote{Note that the game rank of $G_p$ depends only on $G$ and finitely many digits of $p$.}
  By induction on the game rank of $G_{\dot p}$, we 
  will specify for every $n < \omega$ such that $M$ has at least $n+1$ Woodin cardinals and every such
  $\mathbb{P}_n$-name $\dot p$,
a $\mathbb{P}_n$-name $\dot B^{\dot p,I}_n$ in $M$ such that whenever $G$ is
  $\mathbb{P}_n$-generic over $M$,
  \[ \dot B^{\dot p,I}_n[G] =  \dot W^p_{\mathrm{I}}(M[G]), \] where
  $p = \dot p[G]$. We
  will call this name $\dot B^{\dot p,I}_n$ the \emph{good $\mathbb{P}_n$-name
    for $\dot W^p_{\mathrm{I}}$}. We will also define analogous
  names $\dot B^{\dot p,II}$ for $\dot W^p_{\mathrm{II}}$. We now proceed to the recursive definition of $\phi^p_{\mathrm{I}}$, $\phi^p_{\mathrm{II}}$, $\dot W^p_{\mathrm{I}}$, $\dot W^P_{\mathrm{II}}$, $\dot B_n^{p,I}$, and $\dot B_n^{p,II}$.\footnote{For every $p$, $\phi^p_{\mathrm{I}}$ will be one of three formulae (there is one possibility for the successor case and two for the limit case). We will however use the notation $\phi^p_{\mathrm{I}}$ to make the presentation uniform. Similarly for $\phi^p_{\mathrm{II}}$.}

\begin{case}
  $\gr(G_p) = 2$ (this is the base case).
\end{case}

Let $y \geq_T r$ and let $M$ be any $y$-premouse which is a model of
$\ZFC$ and of class $S_1$. Assume without loss of generality that $M$ is minimal, in the sense that no proper initial segment of $M$ is a model of $\ZFC$ of class $S_1$. Let $\delta$ be the least Woodin cardinal in $M$,
so that $M$ is of class $S_0$ above $\delta$, and let $p \in M$. We define the
$\Col(\omega,\delta)$-names for sets of reals $\dot W^p_{\mathrm{I}}$
and $\dot W^p_{\mathrm{II}}$ by\footnote{Here and below, all names for real are assumed to be nice.}
\[\dot W^p_{\mathrm{I}} = \dot W^p_{\mathrm{I}}(M) = \{(\dot y,q) \, | \, q
  \forces{M}{\Col(\omega,\delta)} \text{ Player I has a winning
    strategy in } G_{\check p^\frown \dot y} \}\] and
\[\dot W^p_{\mathrm{II}} = \dot W^p_{\mathrm{II}}(M) = \{(\dot y,q) \, | \, q
  \forces{M}{\Col(\omega,\delta)} \text{ Player II has a winning
    strategy in } G_{\check p^\frown \dot y} \}.\] Let
$\phi^p_{\mathrm{I}}$ and $\phi^p_{\mathrm{II}}$ be the formulae given
by Neeman's theorem (Theorem \ref{thm:Neeman}) such that the following
hold:
\begin{enumerate}
\item If $M\models\phi^p_{\mathrm{I}}[\dot W^p_{\mathrm{I}}]$,
  there is a strategy $\sigma$ for Player I in a game of length
  $\omega$ such that whenever $x$ is a play by $\sigma$, there is an
  iterate $N$ of $M$, an elementary embedding
  $j \colon M \rightarrow N$ and an $N$-generic
  $h\subset\Col(\omega,j(\delta))$ such that $x\in N[h]$ and
  $x \in j(\dot W^p_{\mathrm{I}})[h]$.
\item If
  $M\models\phi^p_{\mathrm{II}}[\dot W^p_{\mathrm{II}}]$,
  there is a strategy $\tau$ for Player II in a game of length
  $\omega$ such that whenever $x$ is a play by $\tau$, there is an
  iterate $N$ of $M$, an elementary embedding
  $j\colon M \rightarrow N$ and an $N$-generic
  $h\subset\Col(\omega,j(\delta))$ such that $x\in N[h]$ and
  $x \in j(\dot W^p_{\mathrm{II}})[h]$.
\item Otherwise, there is an $M$-generic
  $g \subset \Col(\omega,\delta)$ and an $x\in M[g]$ such that
  $x\not\in \dot W^p_{\mathrm{I}}[g]$ and
  $x\not\in \dot W^p_{\mathrm{II}}[g]$.
\end{enumerate}

Now, we specify the good $\mathbb{P}_n$-names $\dot B^{\dot p,I}_n$ as claimed
above, for models with enough Woodin cardinals below $\delta$ so that $\mathbb{P}_n$ is defined. 
For $n = 0$, suppose that $M$ is a premouse with Woodin cardinals $\delta_0<\delta$ and is of class $S_0$ above $\delta$ and minimal above $\delta$.\footnote{A case of interest will be that of such $M$ which are initial segments  of a model $N$ of some class $S_\beta$ which is minimal above $\delta_0$. In such an $N$, $\delta$ need not be a cardinal.} 
Let $\dot p$ be a nice $\Col(\omega, \delta_0)$-name for a
finite sequence of reals and set
\[ \dot B^{\dot p,I}_0 = \{ ((\dot{\dot{y}}, \dot q), q_0) \, | \, q_0
  \forces{}{\Col(\omega,\delta_0)} \dot q
  \forces{}{\Col(\omega,\delta)^{\check{}}} \text{ Player I has a
    winning strategy in } G_{\check{\dot{p}}^\frown \dot{\dot{y}}}
  \}. \] The good $\mathbb{P}_n$-names $\dot B^{\dot p,I}_n$ for $n > 0$ and similar
names $\dot B^{\dot p,II}_n$ for $\dot W^p_{\mathrm{II}}$ are defined analogously.


\begin{case}
  $\gr(G_p) = \gamma+1$ for some $\gamma\geq 2$.
\end{case}

Let $y \geq_T r$ and let $M$ be any $y$-premouse which is a model of
$\ZFC$ of class $S_{\gamma +1}$. Assume without loss of generality that $M$ is minimal, in the sense that no proper initial segment of $M$ is a model of $\ZFC$ of class $S_{\gamma+1}$. Let $\delta$ be the least Woodin cardinal in
$M$, so that $M$ is of class $S_\gamma$ above $\delta$, and let $p \in M$. Then let
\[\dot W^p_{\mathrm{I}} = \dot W^p_{\mathrm{I}}(M) = \{(\dot y,q) \, | \, q
  \forces{M}{\Col(\omega)} \phi^{\check p^\frown \dot
    y}_{\mathrm{I}}[\dot B] \},\] where 
 $\dot B \in M$ is the good $\Col(\omega,\delta)$-name with respect to
  $\check{p}^\frown \dot y$, so that whenever $G$ is
  $\Col(\omega, \delta)$-generic over $M$,
  $\dot B[G] = \dot W^{p^\frown y}_{\mathrm{I}}(M[G])$,
  where $p^\frown y = (\check{p}^\frown \dot y)[G]$. 
We also let
\[\dot W^p_{\mathrm{II}} = \dot W^p_{\mathrm{II}}(M) = \{(\dot y,q) \, | \, q
  \forces{M}{\Col(\omega,\delta)} \phi^{\check p^\frown \dot
    y}_{\mathrm{II}}[\dot B] \},\] for 
$\dot B$ analogous as above. Note that $\dot B$ is a good name for
$\dot W^{p^\frown y}_{\mathrm{I}}$ or
$\dot W^{p^\frown y}_{\mathrm{II}}$ respectively and hence already
defined since for every real $y$, $\gr(G_{p^\frown y}) = \gamma$. Let
$\phi^p_{\mathrm{I}}$ and $\phi^p_{\mathrm{II}}$ be the formulae given
by Neeman's theorem (Theorem \ref{thm:Neeman}) such that the following
hold:
\begin{enumerate}
\item If $M\models\phi^p_{\mathrm{I}}[\dot W^p_{\mathrm{I}}]$,
  there is a strategy $\sigma$ for Player I in a game of length
  $\omega$ such that whenever $x$ is a play by $\sigma$, there is an
  iterate $N$ of $M$, an elementary embedding
  $j \colon M \rightarrow N$ and an $N$-generic
  $h\subset\Col(\omega,j(\delta))$ such that $x\in N[h]$ and
  $x \in j(\dot W^p_{\mathrm{I}})[h]$.
\item If
  $M\models\phi^p_{\mathrm{II}}[\dot W^p_{\mathrm{II}}]$,
  there is a strategy $\tau$ for Player II in a game of length
  $\omega$ such that whenever $x$ is a play by $\tau$, there is an
  iterate $N$ of $M$, an elementary embedding
  $j\colon M \rightarrow N$ and an $N$-generic
  $h\subset\Col(\omega,j(\delta))$ such that $x\in N[h]$ and
  $x \in j(\dot W^p_{\mathrm{II}})[h]$.
\item Otherwise, there is an $M$-generic $g \subset \Col(\omega,\delta)$
  and an $x\in M[g]$ such that $x\not\in \dot W^p_{\mathrm{I}}[g]$ and
  $x\not\in \dot W^p_{\mathrm{II}}[g]$.
\end{enumerate}

Now, we specify the good $\mathbb{P}_n$-names $\dot B^{\dot p,I}_n$. For $n = 0$, suppose $M$ is a premouse with Woodin cardinals $\delta_0<\delta$ and that $M$ is of class $S_\gamma$ above $\delta$ and minimal above $\delta$. Let $\dot p \in M$ be a nice $\Col(\omega, \delta_0)$-name
for a finite sequence of reals. Set 
\[ \dot B^{\dot p,I}_0 = \{ ((\dot{\dot{y}}, \dot q), q_0) \, | \, q_0
  \forces{}{\Col(\omega,\delta_0)} \dot q
  \forces{}{\Col(\omega,\delta)^{\check{}}}
  \phi^{\check{\dot{p}}^\frown \dot{\dot{y}}}_{\mathrm{I}}[
  \dot B] \}, \] where $\dot B$ is a good
$\mathbb{P}(\delta_0,\delta)$-name for $\dot W_{\mathrm{I}}^p$. That
means $\dot B$ is such that whenever $G$ is
$\mathbb{P}(\delta_0,\delta)$-generic over $M$,
$\dot B[G] = \dot W^{p^\frown y}_{\mathrm{I}}(M[G])$, where
$p^\frown y = (\check{\dot{p}}^\frown \dot{\dot{y}})[G]$. The
$\mathbb{P}_n$-names $\dot B^{\dot p,I}_n$ for $n > 0$ and similar names $\dot B^{\dot p,II}_n$ for
$\dot W^p_{\mathrm{II}}$ are defined analogously.

\begin{case}
  $\gr(G_p) = \lambda$ is a limit ordinal and the rules of $G$ dictate
  that, after $p$, it is Player I's turn.
\end{case}

Let $y \geq_T r$ and let $M$ be any $y$-premouse which is a model of
$\ZFC$ and of class $S_\lambda$. Let $p \in M$. Then let
$\dot W^p_{\mathrm{I}} = \dot W^p_{\mathrm{I}}(M)$ be the set
of all $k\in\omega$ such that there is an $\eta < M \cap \Ord$ such
that $M|\eta$ is an active $y$-premouse of class
$S_{\gr(G_{p^\frown k})}$ which is minimal, in the sense that no proper initial segment thereof is of class $S_{\gr(G_{p^\frown k})}$,
and, if we let $M^*$ be the result of iterating the active extender of
$M|\eta$ out of the universe, then
\[M^* \models \phi^{p^\frown k}_{\mathrm{I}}[\delta^*, \dot
  W_{\mathrm{I}}^{p^\frown k}(M^*)].\] This makes sense because for all
$k\in\omega$, $\gr(G_{p^\frown k}) < \gr(G_{p})$, so the set
$\dot W_{\mathrm{I}}^{p^\frown k}(M^*)$ has been defined for
all $M^*$ as above.  Moreover, $\dot W^p_{\mathrm{I}}$ belongs to $M$,
since the formulae
$\phi^{p^\frown k}_I(\dot W_{\mathrm{I}}^{p^\frown
  k}[M^*])$, together with their parameters
$\dot W_{\mathrm{I}}^{p^\frown k}(M^*)$, are
definable\footnote{Recall that the structure $M$ includes a predicate
  for its extender sequence.} uniformly in $p$, $\gr(G_p)$, and $\eta$, as can be shown inductively by following this
construction and the proof of Theorem \ref{thm:Neeman} (cf. the proof
of \cite[Theorem 1E.1]{Ne04}).

We let $\phi^p_{\mathrm{I}}[\dot W_{\mathrm{I}}^p(M)]$
be the formula asserting that $\dot W^p_{\mathrm{I}}$ is
non-empty. 
Similarly, let
$\dot W^p_{\mathrm{II}} = \dot W^p_{\mathrm{II}}(M)$ be the set
of all $k\in\omega$ such that whenever $\eta < M \cap \Ord$ is
such that $M|\eta$ is an active $y$-premouse of class
$S_{\gr(G_{p^\frown k})}$ which is minimal, then
\[M^* \models \phi^{p^\frown k}_{\mathrm{II}}[\dot
  W_{\mathrm{II}}^{p^\frown k}(M^*)],\] for $M^*$ defined as above. We let
$\phi^p_{\mathrm{II}}[\dot W_{\mathrm{II}}^p(M)]$ be the
formula asserting that $\dot W^p_{\mathrm{II}}$ is equal to
$\omega$. As before, the set $\dot W^p_{\mathrm{II}}$ belongs to $M$.

The good $\mathbb{P}_n$-names $\dot B^{\dot p,I}_n$ and $\dot B^{\dot p,II}_n$ are defined as before, for premice $M$ which are of class $S_\lambda$ above finitely many Woodin cardinals.

\begin{case}
  $\gr(G_p) = \lambda$ is a limit ordinal and the rules of $G$ dictate
  that, after $p$, it is Player II's turn. 
\end{case}

Let $y \geq_T r$ and let $M$ be any $y$-premouse which is a model of
$\ZFC$ and of class $S_\lambda$. Let $p \in M$. Then let
$\dot W^p_{\mathrm{I}}(M)$ be the set of all $k\in\omega$ such
that there is an $\eta < M \cap \Ord$ such that $M|\eta$ is an active
$y$-premouse of class $S_{\gr(G_{p^\frown k})}$ which is minimal, in the sense that no proper initial segment of $M|\eta$ is of class $S_{\gr(G_{p^\frown k})}$, and, if we again let $M^*$ be the result of
iterating the active extender of $M|\eta$ out of the universe, then
\[ M^* \models \phi^{p^\frown k}_{\mathrm{I}}[\dot
  W_{\mathrm{I}}^{p^\frown k}(M^*)]. \] Again, $\dot W_{\mathrm{I}}^p$ is in
$M$. We let
$\phi^p_{\mathrm{I}}[\dot W_{\mathrm{I}}^p(M)]$ be the
formula asserting that $\dot W^p_{\mathrm{I}}$ is equal to
$\omega$. Similarly, let
$\dot W^p_{\mathrm{II}} = \dot W^p_{\mathrm{II}}(M)$ be the set
of all $k\in\omega$ such that whenever $\eta < M \cap \Ord$ is such
that $M|\eta$ is an active $y$-premouse of class
$S_{\gr(G_{p^\frown k})}$ which is minimal, then
\[M^* \models \phi^{p^\frown k}_{\mathrm{II}}[\dot
  W_{\mathrm{II}}^{p^\frown k}(M^*)],\] for $M^*$ defined as above. We let
$\phi^p_{\mathrm{II}}[\dot W_{\mathrm{II}}^p(M)]$ be
the formula asserting that $\dot W^p_{\mathrm{II}}$ is non-empty.

The good $\mathbb{P}_n$-names $\dot B^{\dot p,I}_n$ and $\dot B^{\dot p,II}_n$ are defined as before, for premice $M$ which are of class $S_\lambda$ above finitely many Woodin cardinals.\\

This completes the definition of the names $\dot W^p_{\mathrm{I}}$,
$\dot W^p_{\mathrm{II}}$, $\dot B^{\dot p,I}_n$, $\dot B^{\dot p,II}_n$, and the formulae
$\phi^p_{\mathrm{I}}$ and $\phi^p_{\mathrm{II}}$. Continuing with the
proof of Theorem \ref{prop:simpleclopen}, we prove a technical
claim which shows that these names behave well under elementary
embeddings.

\begin{claim}\label{cl:nameselememb}
  Suppose $\gr(G)$ is a successor ordinal, let $M$ be a proper class
  premouse which is a model of $\ZFC$ of class $S_{\gr(G)}$ and minimal, in the sense that no proper initial segment of $M$ is a model of $\ZFC$ of class $S_{\gr(G)}$.\footnote{Since $\gr(G)$ is a successor ordinal and $M$ is minimal, $M$ has a Woodin cardinal.} Let
  $j \colon M \rightarrow N$ be an elementary embedding. Let $p \in M$
  be a finite sequence of reals in $M$. Then
  \[ j(\dot W_{\mathrm{I}}^p(M)) = \dot W_{\mathrm{I}}^p(N
) \] and, analogously,
  \[ j(\dot W_{\mathrm{II}}^p(M)) = \dot W_{\mathrm{II}}^p(N). \]
\end{claim}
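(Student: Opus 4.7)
The plan is to prove this claim by a simultaneous induction on $\gr(G_p)$ that covers not only $\dot W^p_{\mathrm{I}}(M)$ and $\dot W^p_{\mathrm{II}}(M)$ but also the good $\mathbb{P}_n$-names $\dot B^{\dot p,I}_n(M)$ and $\dot B^{\dot p,II}_n(M)$. The central observation is that in each of the four cases of the preceding construction, these objects are defined \emph{uniformly} by a formula in the language $\lpm$ of relativized premice, with parameters depending only on $p$, $\gr(G_p)$, and (in the good-name case) on $n$ and $\dot p$. Since $M$ is a proper class premouse and $j$ is an (iteration) embedding, the critical point of $j$ is well above $\omega$, so $j$ fixes the reals coding $p$ and the game $G$. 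Once uniform definability is recorded, elementarity of $j$ delivers the claimed equality automatically.

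Concretely, first I would handle the base case $\gr(G_p) = 2$: here $\dot W^p_{\mathrm{I}}(M)$ is defined via $\forces{M}{\Col(\omega,\delta)}$ and the statement \emph{``Player I has a winning strategy in $G_{\check p^\frown \dot y}$''}, both uniformly definable over $M$; moreover $j(\delta)$ is the least Woodin of $N$ by elementarity and the minimality of $N$, so the required equality follows. In the successor case $\gr(G_p) = \gamma + 1$, the definition invokes the Neeman formula $\phi^{\check p^\frown \dot y}_{\mathrm{I}}[\dot B]$, where $\dot B$ is the good name for $\dot W^{p^\frown y}_{\mathrm{I}}$. The inductive hypothesis applied to the good names at rank $\gamma$ gives that $j$ sends $\dot B$ in $M$ to the analogous good name in $N$, and the Neeman formulae are fixed formulae of set theory, so elementarity yields the result.

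The limit case $\gr(G_p) = \lambda$ is the more delicate one. There $\dot W^p_{\mathrm{I}}(M)$ is the set of those $k$ for which some active initial segment $M|\eta$ of class $S_{\gr(G_{p^\frown k})}$, minimal as such, has top-extender iterate $M^*$ satisfying $\phi^{p^\frown k}_{\mathrm{I}}[\dot W_{\mathrm{I}}^{p^\frown k}(M^*)]$. I would argue that membership in the class $S_\beta$ is definable uniformly in $\beta$ for $\beta < \omega_1^M$, so elementarity puts the witnessing $\eta$'s in $M$ into one-to-one correspondence with those in $N$ via $\eta \mapsto j(\eta)$. One then needs that iterating the top extender out of the universe commutes with $j$ in the appropriate sense: $j$ sends $M|\eta$ to $N|j(\eta)$, and the induced iteration maps lift $j$ to an elementary embedding $M^* \to N^*$. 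Combined with the inductive hypothesis at rank $\gr(G_{p^\frown k}) < \lambda$, this gives the desired equality; the Player II clauses are symmetric.

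The main obstacle will be organizing the limit case cleanly, in particular verifying that the top-extender iteration of $M|\eta$ interacts well enough with $j$ that one may faithfully invoke the inductive hypothesis on the proper class model $M^*$. A secondary but essential bookkeeping point is that the induction must bundle the good $\mathbb{P}_n$-names together with the $\dot W$-names, because the successor step of the latter depends on the former at strictly lower game rank; handling the two families in lockstep is what makes the induction go through.
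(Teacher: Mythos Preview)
Your proposal is correct and follows essentially the same route as the paper: an induction on $\gr(G_{\dot p})$ carried out simultaneously for all relevant premice, bundling the $\dot W$-names together with the good $\mathbb{P}_n$-names $\dot B^{\dot p,I}_n$, and appealing to elementarity once uniform definability is established. The lifting of $j$ to an embedding $M^*\to N^*$ in the limit case, which you flag as the main obstacle, is exactly what the paper's argument needs as well (it applies the inductive hypothesis to $\dot C\in M^*$ and writes ``$j(\dot C)\in N|(j(M^*)\cap\Ord)$'' for the image), so your more explicit treatment via the standard copying construction is if anything a clarification rather than a deviation.
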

\begin{proof} 
Let $\delta$ denote the least Woodin cardinal of $M$.
We will in fact show that whenever $\dot p \in M$ is a
  $\mathbb{P}_n$-name for a finite sequence of reals such that $\gr(G_{\dot p})$ is decided by the empty condition, we have
\[j(\dot B^{\dot p,I}_n) = (\dot B^{j(\dot p),I}_n)^N,\] 
and
\[j(\dot B^{\dot p,II}_n) = (\dot B^{j(\dot p),II}_n)^N,\]  
i.e., if
  $\dot B \in M$ is a good $\mathbb{P}_n$-name such that whenever $G$
  is $\mathbb{P}_n$-generic over $M$,
  \[ \dot B[G] = \dot W_{\mathrm{I}}^{\dot p[G]}(M[G]), \]
  then $j(\dot B) \in N$ is a good $j(\mathbb{P}_n)$-name such that
  whenever $H$ is $j(\mathbb{P}_n)$-generic over $N$,
  \[ j(\dot B)[H] = \dot W_{\mathrm{I}}^{j(\dot p)[H]}(N[H]) \] 
    and similarly for $\dot B^{\dot p, II}_n$.

  This will yield the claim if applied to $n = 0$, as, by definition,
  \[\dot W_{\mathrm{I}}^p(M) = \{(\dot y, q):q \forces{
      M}{\Col(\omega,\delta)} \phi^{\check p^\frown \dot
      y}_{\mathrm{I}}[\dot B^{\check p^\frown \dot
      y, I}_0] \}.\] 
       Hence,
  \begin{align*}
    j(\dot
    W_{\mathrm{I}}^p(M)) 
    &=    \{(\dot y, q):q \forces{
      N}{\Col(\omega,j(\delta))} \phi^{\check p^\frown \dot
      y}_{\mathrm{I}}[ j(\dot B^{\check{p}^\frown \dot y,I}_0)]
    \} \\
& = 
\{(\dot y, q):q \forces{
      N}{\Col(\omega,j(\delta))} \phi^{\check p^\frown \dot
      y}_{\mathrm{I}}[  (\dot B_0^{j(\check p^\frown \dot y),I})^N]
    \} \\    
& = \dot W_{\mathrm{I}}^{p}(N),
  \end{align*}
  and similarly for $\dot B^{\dot p, II}_n$.

  By the definition of simple games, $\gr(G_p)$ depends only on $G$,
  the length of $p$, and finitely many values of $p$ (those in which
  the players determine the subgames played). Thus, if $\dot p$
  is a $\mathbb{P}_n$-name in $M$ for a finite sequence of reals such
  that $\forces{M}{\mathbb{P}_n} \gr(G_{\dot p}) = \gamma +1$ for some
  countable ordinal $\gamma$ and if $\dot y$ is a
  $\mathbb{P}(\delta_0, \dots, \delta_n, \delta)$-name for a real,
  then
  \[\forces{M}{\mathbb{P}(\delta_0, \dots, \delta_n, \delta)}
  \gr(G_{\check{\dot{p}}^\frown \dot y}) = \gamma.\] 
The claim is proved simultaneously for all premice $M$ and all $\dot p \in M$, by induction on the game rank of $\gr(G_p)$. We prove it for the names $\dot B^{\dot p,I}_n$ for Player I; the other part is similar. We proceed by cases: 


  First suppose that $\gamma = 2$ and let $n<\omega$ and $\dot p \in M$ be a
  $\mathbb{P}_n$-name for a finite sequence of reals such that
  $\forces{M}{\mathbb{P}_n} \gr(G_{\dot p}) = 2$. Let $\dot B = \dot B^{\dot p,I}_n \in M$, so that whenever $G$ is
  $\mathbb{P}_n$-generic over $M$,
\[ \dot B[G] = \dot W_{\mathrm{I}}^{\dot p[G]}(M[G]). \]
By definition, \[ \forces{M}{} \dots \forces{}{} \dot B = \{(\dot y,q) \, | \, q
  \forces{}{\Col(\omega,\delta)} \text{ Player I has a winning
    strategy in } G_{\check{\dot{p}}^\frown \dot y} \}. \]
By elementarity,
\[ \forces{N}{} \dots \forces{}{} j(\dot B) = \{(\dot y,q) \, | \, q
  \forces{}{\Col(\omega,j(\delta))} \text{ Player I has a winning
    strategy in } G_{j(\check{\dot{p}})^\frown \dot y} \}. \] But this
implies that $j(\dot B)$ is a good $j(\mathbb{P}_n)$-name such that
whenever $H$ is $j(\mathbb{P}_n)$-generic over $N$,
\[ j(\dot B)[H] = \dot W_{\mathrm{I}}^{j(\dot p)[H]}(N[H]). \]
This finishes the case $\gamma = 2$.

If $\gamma = \beta+1$, where $\beta >1$, let again $n<\omega$, $\dot p \in M$ be a
$\mathbb{P}_n$-name for a finite sequence of reals such that
$\forces{M}{\mathbb{P}_n} \gr(G_{\dot p}) = \beta+1$ and let
$\dot B = \dot B^{\dot p,I}_n\in M$, so that whenever $G$ is
$\mathbb{P}_n$-generic over $M$,
  \[ \dot B[G] = \dot W_{\mathrm{I}}^{\dot p[G]}(M[G]). \]
  This means that
  \[ \forces{M}{} \dots \forces{}{} \dot B = \{(\dot y,q) \, | \, q
    \forces{}{\Col(\omega,\delta)} \phi^{\check{\dot{p}}^\frown \dot
      y}_{\mathrm{I}}[\dot C]\}, \] where $\dot C \in M$
  is a good $\mathbb{P}(\delta_0, \dots, \delta_n, \delta)$-name such that
  whenever $g$ is
  $\mathbb{P}(\delta_0, \dots, \delta_n, \delta)$-generic over $M$,
  \[ \dot C[g] = \dot W_{\mathrm{I}}^{(\check{\dot{p}}^\frown \dot y)[g]}(M[g]). \] By inductive hypothesis, and using that
  $\forces{M}{\mathbb{P}(\delta_0, \dots, \delta_n, \delta)}
  \gr(G_{\check{\dot p}^\frown \dot y}) = \beta < \gr(G_{\check{\dot{p}}})$, we have that
  $j(\dot C) \in N$ is a good
  $\mathbb{P}(j(\delta_0), \dots, j(\delta_n), j(\delta))$-name such
  that whenever $h$ is
  $\mathbb{P}(j(\delta_0), \dots, j(\delta_n), j(\delta))$-generic
  over $N$,
  \[ j(\dot C)[h] = \dot W_{\mathrm{I}}^{j(\check{\dot{p}}\frown \dot
      y)[h]}(N[h]). \] Since, by elementarity, 
  \[ \forces{N}{} \dots \forces{}{} j(\dot B) = \{(\dot y,q) \, | \, q
    \forces{}{\Col(\omega,j(\delta))} \phi^{j(\check{\dot{p}}^\frown
      \dot y)}_{\mathrm{I}}[j(\dot C)]\}, \] it follows
  that $j(\dot B)$ is a good $j(\mathbb{P}_n)$-name such that for
  every $j(\mathbb{P}_n)$-generic $H$ over $N$,
  \[ j(\dot B)[H] = \dot W_{\mathrm{I}}^{j(\dot p)[H]}(N[H]), \] as desired. 

  If $\gamma$ is a limit ordinal, let again $n<\omega$, $\dot p \in M$ be a
  $\mathbb{P}_n$-name for a finite sequence of reals such that
  $\forces{M}{\mathbb{P}_n} \gr(G_{\dot p}) = \gamma$. Let
  $\dot B = \dot B^{\dot p,I}_n\in M$, so that whenever $G$
  is $\mathbb{P}_n$-generic over $M$,
  \[ \dot B[G] = \dot W_{\mathrm{I}}^{\dot p[G]}(M[G]). \]
  This means that
  \begin{align*}
    \forces{M}{} \dots & \forces{}{} \dot B = \{ k \in \omega \, | \,
    \text{there is an initial segment } M^* \text{ minimal of class } \\ & 
    S_{\gr(G_{\dot p^\frown k})}  \text{ which satisfies } \phi^{\dot p^\frown
      k}_\mathrm{I}[\dot C] \}, 
  \end{align*}
  where $\dot C \in M^*$ is a good $\mathbb{P}_n$-name such that whenever
  $g$ is $\mathbb{P}_n$-generic over $M^*$,
  \[ \dot C[g] = \dot W_{\mathrm{I}}^{(\dot p^\frown k)[g]}(M^*[g]). \] By inductive hypothesis,
  $j(\dot C) \in N|(j(M^*) \cap \Ord)$ is a good $j(\mathbb{P}_n)$-name
  such that whenever $h$ is $j(\mathbb{P}_n)$-generic over
  $N|(j(M^*) \cap \Ord)$,
  \[ j(\dot C)[h] = \dot W_{\mathrm{I}}^{j(\dot p^\frown
      k)[h]}(N|(j(M^*) \cap \Ord)[h]). \] Since, by
  elementarity,
  \begin{align*}
    \forces{N}{} \dots & \forces{}{} j(\dot B) = \{ k \in \omega \, | \,
    \text{there is an initial segment } N^* \text{ minimal of class } 
\\ &  S_{\gr(G_{\dot p^\frown k})} \text{ which satisfies } \phi^{j(\dot p^\frown
      k)}_\mathrm{I}[ j(\dot C)] \},
  \end{align*}
  it follows that $j(\dot B)$ is a good $j(\mathbb{P}_n)$-name such
  that for every $j(\mathbb{P}_n)$-generic $H$ over $N$,
  \[ j(\dot B)[H] = \dot W_{\mathrm{I}}^{j(\dot p)[H]}(N[H]), \] as desired. 

  The argument for $\dot W_{\mathrm{II}}^p(M)$ is
  analogous. This completes the proof of the claim.
\end{proof}

Now we turn to the proof of the following claim, from which the
theorem follows. Recall that $\alpha = \gr(G)$.

\begin{claim}\label{cl:main}
  Let $M$ be an $r$-premouse which of $\ZFC$ of class $S_\alpha$ but has no proper initial segment of
  class $S_{\alpha}$. Let $\delta$ denote the least Woodin cardinal
  in $M$. Then
  \begin{enumerate}
  \item \label{proofsd1} If
    $M \models \phi^\emptyset_{\mathrm{I}}[\dot
    W_{\mathrm{I}}^\emptyset]$, then Player I has a winning strategy
    in $G$.
  \item \label{proofsd2} If
    $M \models \phi^\emptyset_{\mathrm{II}}[\dot
    W_{\mathrm{II}}^\emptyset]$, then Player II has a winning strategy
    in $G$.
  \item \label{proofsd3}
    $M\models\phi^\emptyset_{\mathrm{I}}[\dot
    W_{\mathrm{I}}^\emptyset] \vee
    \phi^\emptyset_{\mathrm{II}}[ \dot
    W_{\mathrm{II}}^\emptyset]$.
  \end{enumerate}
\end{claim}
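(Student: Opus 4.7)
My plan is to prove Claim \ref{cl:main} by simultaneous induction on $\gr(G_p)$, establishing for each partial play $p$ of $G$ and each proper class $r$-premouse $M^*$ of class $S_{\gr(G_p)}$ satisfying the analogous minimality condition, the three statements (1), (2), (3) with $M^*$ in place of $M$ and $G_p$ in place of $G$. Applying this with $p = \emptyset$ and $M^* = M$ then yields the claim. The main engine is Neeman's Theorem \ref{thm:Neeman}, and the key bookkeeping tool is Claim \ref{cl:nameselememb}, which guarantees that the names $\dot W^p_{\mathrm{I}}$ and $\dot W^p_{\mathrm{II}}$ are computed the same way (up to $j$) in any iterate of $M^*$, so that ``$\phi^p_{\mathrm{I}}[\dot W^p_{\mathrm{I}}]$'' is preserved under iteration.

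For the base case $\gr(G_p) \leq 2$, the game $G_{p^\frown y}$ is a clopen game of length at most $\omega$ for every choice of the next real $y$, and so is determined absolutely. Part (1) then follows by applying Neeman at the least Woodin $\delta^*$ to the name $\dot W^p_{\mathrm{I}}$: the auxiliary strategy $\sigma$ for Player I produces a real $y$ and a non-dropping iterate $Q = j(M^*)$ with $Q$-generic $h$ witnessing $y \in j(\dot W^p_{\mathrm{I}})[h] = \dot W^p_{\mathrm{I}}(Q)[h]$; by the definition of $\dot W^p_{\mathrm{I}}$ at rank $2$, $Q[h]$ believes Player I has a winning strategy in $G_{p^\frown y}$, and absoluteness of clopen determinacy lifts this to $V$. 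Part (2) is symmetric, and part (3) is contrapositive: if neither formula holds, Neeman yields a generic $g$ and a real $y \in M^*[g]$ in neither name, contradicting the absolute determinacy of $G_{p^\frown y}$.

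For the successor case $\gr(G_p) = \gamma+1$, Player I's winning strategy is built in stages: apply Neeman to extract $\sigma$, whose play $y$ lands in an iterate $Q$ with generic $h$ where $Q[h] \models \phi^{p^\frown y}_{\mathrm{I}}[\dot W^{p^\frown y}_{\mathrm{I}}(Q[h])]$, the name conversion being furnished by Claim \ref{cl:nameselememb} and the definition of $\dot W^p_{\mathrm{I}}$ at the successor level. The crucial step is then to extract from $Q[h]$ a proper class premouse of class $S_\gamma$ satisfying the minimality hypothesis, so that the inductive hypothesis applies to $G_{p^\frown y}$; here one uses that minimality of $M^*$ forces $M^*$ itself to be of class $S_\gamma$ above $\delta^*$, that iteration by extenders above $\delta^*$ preserves this, and that the collapse of $\delta^*$ does not disturb the structure above $j(\delta^*)$. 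Combining the two pieces gives Player I's strategy in $G_p$; part (2) is symmetric, and part (3) is a contradiction argument using Neeman's third clause and the inductive hypothesis on (3) in the collapsed model.

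The limit case $\gr(G_p) = \lambda$ does not require Neeman at this stage: by construction of $\phi^p_{\mathrm{I}}$ and $\dot W^p_{\mathrm{I}}$, a single natural number $k$ selects an active initial segment $M^*|\eta$ whose iteration out of the universe produces a proper class premouse $M'$ of class $S_{\gr(G_{p^\frown k})}$ (with the requisite minimality) satisfying $\phi^{p^\frown k}_{\mathrm{I}}$, and dually for $\phi^p_{\mathrm{II}}$. Depending on whose turn it is after $p$, either the player choosing $k$ picks a witnessing $k$ and invokes the inductive hypothesis, or the player responding faces a winning opponent strategy guaranteed by the inductive hypothesis for each $k$. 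Part (3) follows by noting that $\neg\phi^p_{\mathrm{I}}$ and $\neg\phi^p_{\mathrm{II}}$ together force the existence of some $k$ for which both $\phi^{p^\frown k}_{\mathrm{I}}$ and $\phi^{p^\frown k}_{\mathrm{II}}$ fail in the associated $M'$, contradicting the inductive hypothesis for (3). I expect the principal obstacle to lie in the successor step: carefully identifying, inside the iterate plus generic, the proper class premouse of class $S_\gamma$ with the correct minimality to which the inductive hypothesis legitimately applies, and tracking how the ``good names'' $\dot B^{\dot p, I}_n$, $\dot B^{\dot p, II}_n$ interact with the iteration embeddings.
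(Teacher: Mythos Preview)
Your proposal is correct and follows essentially the same approach as the paper; the only difference is organizational: the paper unwinds your induction on $\gr(G_p)$ into a direct construction of a finite sequence $p_0, p_1, \ldots$ of partial plays together with premice $M^{p_m}$ (maintaining the invariant $M^{p_m}\models\phi^{p_m}_{\mathrm{I}}[\dot W^{p_m}_{\mathrm{I}}]$ for part (1), or its failure for both players for part (3)), terminating at rank $2$. One small fix: your inductive hypothesis must quantify over $y$-premice for all $y\geq_T r$, not just $r$-premice, since passing to the forcing extension $N[h]$ and rearranging changes the base real---this is exactly how the paper handles it via the reals $y_m$.
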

\begin{proof}
  If $M$ is active, then we may 
  identify it with the proper class sized iterated ultrapower by
  its top extender and its images, so we may assume that $M$ is a
  model of $\ZFC$.  Note that $M$ always has a Woodin 
  cardinal, as $\alpha$ is a successor ordinal. We first prove
  \eqref{proofsd3}. Suppose that
  $M\not\models\phi^\emptyset_{\mathrm{I}}[\dot
  W_{\mathrm{I}}^\emptyset]$ and
  $M\not\models\phi^\emptyset_{\mathrm{II}}[\dot
  W_{\mathrm{II}}^\emptyset]$. We inductively construct a run of the
  game $G$ by its initial segments $p_m$ together with reals $y_m$,
  $y_m$-premice $M^{p_m}$ which are proper class models of $\ZFC$ and
  ordinals $\delta_{p_m}$. Let $p_0$ be the empty play, $y_0 = r$, and
  $M_0 = M$. Inductively, suppose that $p_m$, $y_m$, and $M^{p_m}$
  have been defined and that
  \[ M^{p_m}\not\models \phi^{p_m}_{\mathrm{I}}[\dot
    W_{\mathrm{I}}^{p_m}] \text{ and } M^{p_m} \not\models
    \phi^{p_m}_{\mathrm{II}}[\dot
    W_{\mathrm{II}}^{p_m}], \]
    and let $\delta_{p_m}$ be the least Woodin cardinal of $M^{p_m}$, if it exists.

  If $\gr(G_{p_m}) = \gamma +1$ is a successor ordinal for some
  $\gamma \geq 2$, then, by Neeman's theorem (Theorem
  \ref{thm:Neeman}), there is an $M^{p_m}$-generic
  $g_m \subset\Col(\omega,\delta_{p_m})$ and a $y \in M^{p_m}[g_m]$
  such that $y \notin \dot W_{\mathrm{I}}^{p_m}[g_m]$ and
  $y \notin \dot W_{\mathrm{II}}^{p_m}[g_m]$. This means that
  \[ M^{p_m}[g_m] \not\models \phi_{\mathrm{I}}^{p_m^\frown
      y}[\dot W_{\mathrm{I}}^{p_m^\frown y}] \text{ and }
    M^{p_m}[g_m] \not\models \phi_{\mathrm{II}}^{p_m^\frown
      y}[\dot W_{\mathrm{II}}^{p_m^\frown y}]. \]
      Let $p_{m+1} = p_m^\frown y$,
  $M^{p_{m+1}} = M^{p_m}[g_m]$. Here $M^{p_m}[g_m]$ can be rearranged as a
  $y_{m+1}$-premouse for some real $y_{m+1}$ coding $y_{m}$ and $g_m$
  as $g_m$ collapses a cutpoint of the $y_m$-premouse $M^{p_m}$. We will always
  consider $M^{p_{m+1}}$ as such a $y_{m+1}$-premouse.


  Suppose that $\gr(G_{p_m})$ is a limit ordinal and the rules of $G$
  dictate that after $p_m$, it is Player I's turn. By inductive
  hypothesis we have
  $M^{p_m}\not\models \phi^{p_m}_{\mathrm{I}}[\dot
  W_{\mathrm{I}}^{p_m}]$, so there is no active initial segment
  $M^{p_m}|\nu$, $\nu \in \Ord$, of $M^{p_m}$ of class
  $S_{\gr(G_{p_m^\frown k})}$ which is minimal and satisfies
  $\phi^{p_m^\frown k}_{\mathrm{I}}[\dot
  W_{\mathrm{I}}^{p_m^\frown k}]$, for any $k\in\omega$. Moreover,
  $M^{p_m}\not\models \phi^{p_m}_{\mathrm{II}}[\dot
  W_{\mathrm{II}}^{p_m}]$. Hence, there is some $k\in\omega$ and an
  active initial segment of $M^{p_m}$ of class
  $S_{\gr(G_{p_m^\frown k})}$ which is minimal and does not satisfy
  $\phi^{p_m^\frown k}_{\mathrm{II}}[\dot
  W_{\mathrm{II}}^{p_m^\frown k}]$, where $\delta^*$ is defined
  analogous as above. Fix such a $k$ and such an active initial
  segment $M^{p_m}|\eta$ of $M^{p_m}$ and let $M^{p_{m+1}}$ be the
  proper class model resulting from iterating the active extender of
  $M^{p_m}|\eta$ out of the universe. Set $p_{m+1} = p_m^\frown k$ and
  $y_{m+1} = y_m$. Then,
  \[ M^{p_{m+1}} \not\models
    \phi^{p_{m+1}}_{\mathrm{I}}[\dot
    W_{\mathrm{I}}^{p_{m+1}}] \text{ and } M^{p_{m+1}} \not\models
    \phi^{p_{m+1}}_{\mathrm{II}}[\dot
    W_{\mathrm{II}}^{p_{m+1}}]. \] The case that $\gr(G_{p_m})$ is a
  limit ordinal and the rules of $G$ dictate that it is Player II's turn after $p_m$ is similar.

  Finally, suppose that $\gr(G_{p_m}) = 2$. By Neeman's theorem (Theorem
  \ref{thm:Neeman}), there is some $M^{p_m}$-generic
  $g_m\subset\Col(\omega,\delta_{p_m})$ and some $y \in M_m[g_m]$ such
  that $y \notin \dot W^{p_m}_{\mathrm{I}}[g_m]$ and
  $y \notin \dot W^{p_m}_{\mathrm{II}}[g_m]$. This means that
\begin{enumerate}
\item $M^{p_m}[g_m]\models$ ``Player I does not have a winning strategy in
  $G_{p_m^\frown y}$'', and
\item $M^{p_m}[g_m]\models$ ``Player II does not have a winning
  strategy in $G_{p_m^\frown y}$''.
\end{enumerate}
However, $\gr(G_{p_m^\frown y}) = 1$, so $G_{p_m^\frown y}$ is a clopen
game of length $\omega$. This is a contradiction, as $M^{p_m}[g_m]$ is
a model of $\ZFC$, so it certainly satisfies clopen determinacy. This
proves \eqref{proofsd3}.

We now prove \eqref{proofsd1}; the proof of \eqref{proofsd2} is
similar.

Let $M$ be as in the statement of the claim and suppose
$M \models \phi_{\mathrm{I}}^{\emptyset}[\dot
W_{\mathrm{I}}^{\emptyset}]$. We will describe a winning strategy $\sigma$
for Player I in $G$ (in $V$) as a concatenation of strategies
$\sigma_m$ for different rounds of $G$. Playing against arbitrary
moves of Player II, we will for $m \geq 1$ inductively construct plays
$p_m$ for initial segments of $G$ according to $\sigma$ together with
\begin{itemize}
\item reals $y_m$ such that $y_{m+1} \geq_T y_m$ and $y_0 = r$,
\item $y_m$-premice $M^{p_m}$ which are proper class models of $\ZFC$
  of class $S_{\gr(G_{p_m})}$, none of whose initial segments are of class $S_{\gr(G_{p_m})}$, and
  such that $p_m \in M^{p_m}$,
\item premice $N^{p_m}$ together with iteration embeddings
  $j_m \colon M^{p_m} \rightarrow N^{p_m}$,
\item if $M^{p_m}$ has a Woodin cardinal and $\delta_{p_m}$ is the
  least Woodin cardinal in $M^{p_m}$,
  $\Col(\omega,j(\delta_{p_{m}}))$-generics $g_{p_m}$ over $N^{p_m}$.
\end{itemize}
In case $M^{p_m}$ does not have a Woodin cardinal, we will let $g_{p_m}$ be
undefined and let $M^{p_{m+1}} = N^{p_m}$. In the other case, we let
$M^{p_{m+1}} = N^{p_m}[g_{p_m}]$. Finally, we will stop the construction
after finitely many steps when $\gr(G_{p_m}) = 1$.

Let $p_0$ be the empty play and $M^{p_0} = M$. We will inductively
argue that
\[ M^{p_m} \models \phi_{\mathrm{I}}^{p_m}[\dot
  W_{\mathrm{I}}^{p_m}(M^{p_m})], \] where
$\dot W_{\mathrm{I}}^{p_m}(M^{p_m})$ is the
$\Col(\omega, \delta_{p_m})$-name or set of natural numbers in
$M^{p_m}$ defined above.



Assume inductively that $p_n$ and $M^{p_n}$ with
$M^{p_n} \models \phi_{\mathrm{I}}^{p_n}[\dot
W_{\mathrm{I}}^{p_n}(M^{p_n})]$ are already constructed
for all $n \leq m$ and that $\gr(G_{p_m}) \geq 2$. To construct
$p_{m+1}$ and $M^{p_{m+1}}$ we distinguish the following cases.

Assume first that $\gr(G_{p_m}) = \gamma+1$ for some $\gamma\geq
2$. Since
\[ M^{p_m} \models \phi_{\mathrm{I}}^{p_m}[\dot
  W_{\mathrm{I}}^{p_m}(M^{p_m})], \] it follows from
Neeman's theorem (Theorem \ref{thm:Neeman}) that there is a strategy
$\sigma_m$ for Player I in a game of length $\omega$ such that
whenever $x$ is a play by $\sigma_m$, there is an iterate $N^{p_m}$ of
$M^{p_m}$, an elementary embedding
$j \colon M^{p_m} \rightarrow N^{p_m}$ and an $N^{p_m}$-generic
$h\subset\Col(\omega,j(\delta_{p_m}))$ such that $x\in N^{p_m}[h]$ and
$x \in j(\dot W^{p_m}_{\mathrm{I}}(M^{p_m})[h]$. Hence,
by Claim \ref{cl:nameselememb},
$x \in \dot W^{p_m}_{\mathrm{I}}(N^{p_m})[h]$. By
definition,
\[ N^{p_m}[h] \models \phi_{\mathrm{I}}^{p_m^\frown x}[\dot
  W^{p_m^\frown x}_{\mathrm{I}}(N^{p_m}[h])]. \] 
  Let $p_{m+1} = p_m^\frown x$, $M^{p_{m+1}} = N^{p_m}[h]$, and $g_{p_{m}} = h$. Note that
$p_{m+1} \in M^{p_{m+1}}$ and as before $N^{p_m}[h]$ can be rearranged
as a $y_{m+1}$-premouse for some real $y_{m+1}$ coding $y_{m}$ and $h$. Again, we will
always consider $M^{p_{m+1}}$ as such a $y_{m+1}$-premouse.


Now suppose that $\gr(G_{p_m}) = \lambda$ is a limit ordinal and the rules of $G$ dictate that, after $p_m$, it is Player I's turn. By
assumption,
$M^{p_m} \models \phi_{\mathrm{I}}^{p_m}[\dot
W_{\mathrm{I}}^{p_m}(M^{p_m})]$. So 
\[ M^{p_m} \models \dot W_{\mathrm{I}}^{p_m}(M^{p_m})
  \neq \emptyset. \] Hence, there is a natural number $k$ and an
ordinal $\eta$ such that $M^{p_m}|\eta$ is an active initial segment
of $M^{p_m}$ of class $S_{\gr(G_{p_m^\frown k})}$ and is minimal, and, if we let $N^{p_m}$ be the result of
iterating the active extender of $M^{p_m}|\eta$ out of the universe,
then
$N^{p_m} \models \phi_{\mathrm{I}}^{p_m^\frown k}[\dot
W_{\mathrm{I}}^{p_m^\frown k}(N^{p_m})].$ 
Let $p_{m+1} = p_m^\frown k$,
$y_{m+1} = y_m$, and $M^{p_{m+1}} = N^{p_m}$. Moreover, let $\sigma_m$ be the strategy which
tells Player I to play $k$.

For the other limit case suppose that $\gr(G_{p_m}) = \lambda$ is a
limit ordinal and the rules of $G$ dictate that, after $p_m$, it is
Player II's turn. In this case we can let $\sigma_m = \emptyset$ as
Player I is not playing, i.e., we only have to react to what Player II
is playing in the next round. Suppose that Player II plays some
natural number $k$ and let $p_{m+1} = p_m^\frown k$. Since
$M^{p_m} \models \phi_{\mathrm{I}}^{p_m}[\dot
W_{\mathrm{I}}^{p_m}(M^{p_m})]$, we have
\[ M^{p_m} \models \dot W_{\mathrm{I}}^{p_m}(M^{p_m}) =
  \omega. \] In particular,
$k \in \dot W_{\mathrm{I}}^{p_m}(M^{p_m})$. Thus there is
an ordinal $\eta$ such that $M^{p_m}|\eta$ is an active initial
segment of $M^{p_m}$, minimal of class $S_{\gr(G_{p_m^\frown k})}$, and if we let $N^{p_m}$ denote the
result of iterating the active extender of $M^{p_m}|\eta$ out of the
universe, then
$N^{p_m} \models \phi_{\mathrm{I}}^{p_m^\frown k}[\dot
W_{\mathrm{I}}^{p_m^\frown k}(N^{p_m})].$ Let
$M^{p_{m+1}} = N^{p_m}$, and $y_{m+1} = y_m$.

Finally, assume that $\gr(G_{p_m}) = 2$. Since
$M^{p_m}\models\phi^{p_m}_{\mathrm{I}}[\dot
W^{p_m}_{\mathrm{I}}(M^{p_m} )]$, by Neeman's theorem (Theorem \ref{thm:Neeman}), there is a strategy
$\sigma_m$ for Player I in a game of length $\omega$ such that
whenever $x$ is a play by $\sigma_m$, there is an iterate $N^{p_m}$ of
$M^{p_m}$, an elementary embedding
$j \colon M^{p_m} \rightarrow N^{p_m}$ and an $N^{p_m}$-generic
$h\subset\Col(\omega,j(\delta_{p_m}))$ such that $x\in N[h]$ and
$x \in j(\dot W^{p_m}_{\mathrm{I}}(M^{p_m}))[h]$. Then,
by Claim \ref{cl:nameselememb},
$x \in \dot W^{p_m}_{\mathrm{I}}(N^{p_m}))[h]$. Therefore,
\[ N^{p_m}[h] \models \text{ Player I has a winning strategy in }
  G_{p_m^\frown x}. \] We let $p_{m+1} = p_m^\frown x$ and stop the
construction. Since $\gr(G_{p_m^\frown x}) = 1$, $G_{p_m^\frown x}$ is
a clopen game of length $\omega$. As $N^{p_m}[h]$ is a proper class
model of $\ZFC$, we can use absoluteness of winning strategies for
clopen games of length $\omega$ to obtain that Player I has a winning
strategy in $G_{p_m^\frown x}$ in $V$. Let $\sigma_{m+1}$ be a
strategy for Player I witnessing this.

This process describes a winning strategy $\sigma$ for Player I in
$G$ by concatenating the strategies $\sigma_i$ for $1\leq i \leq m+1$, as desired.
\end{proof}

This finishes the proof of Theorem \ref{prop:simpleclopen}.
\end{proof}

\section{Further applications}\label{sec:concl}

In this section, we present some additional applications of the proof of 
Theorem \ref{prop:simpleclopen}. Since the proofs are similar, we simply sketch the differences.

\subsection{Longer games}
We begin by noting that the results presented so far generalize to longer games. In particular:

\begin{theorem}\label{TheoremLongSPD}
  Let $\theta$ be a countable ordinal. Suppose that for each
  $\alpha<\omega_1$ and each $y\in\mathbb{R}$ there is some $x\geq_T y$ and an $x$-premouse $M$ of class $S_\alpha$ above
  some $\lambda$ below which there are $\theta$ Woodin cardinals in
  $M$. Then $\sigma$-projective games of length
  $\omega\cdot \theta$ are determined.
\end{theorem}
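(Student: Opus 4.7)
The plan is to combine the decoding technique of Section \ref{sec:sigmaproj} with the argument of Theorem \ref{prop:simpleclopen}, using the $\theta$ Woodin cardinals below $\lambda$ to handle the long-game portion in the style of Neeman \cite{Ne04}, and the class $S_\alpha$ structure above $\lambda$ to handle the $\sigma$-projective payoff. First, the reduction is to pass from a $\sigma$-projective game of length $\omega\cdot\theta$ with payoff $A$ to an equivalent \emph{hybrid} game in which the players first play $\omega\cdot\theta$ moves producing some $\vec x \in (\BS)^\theta$, and afterwards play the decoding game for $A$, which is simple clopen of length $\omega^2$. That this reduction preserves determinacy in both directions follows from an easy adaptation of the proof of Proposition \ref{PropositionSC} to longer games.

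Next, the plan is to adapt the proof of Theorem \ref{prop:simpleclopen} to such hybrid games, defining names $\dot W^p_{\mathrm{I}}$, $\dot W^p_{\mathrm{II}}$ and formulae $\phi^p_{\mathrm{I}}$, $\phi^p_{\mathrm{II}}$ by induction on the rank of the remaining game. For partial plays $p$ of length $\geq \omega\cdot\theta$ the game is in its simple clopen phase, and the inductive construction from Section \ref{sec:simpleclopen} applies essentially verbatim, using the premouse's $S_\alpha$ structure above the relevant image of $\lambda$ to witness the $S_\beta$ classes for $\beta<\alpha$ via the active extenders. For $p$ of length $<\omega\cdot\theta$ the definitions proceed by invoking Neeman's theorem (Theorem \ref{thm:Neeman}) at the successive Woodin cardinals $\delta_0<\delta_1<\cdots<\lambda$ supplied by the hypothesis, with one Woodin dedicated to each $\omega$-block of moves in the long-game part, as in the standard treatment of long-game determinacy in \cite{Ne04}. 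Limit stages $\omega\cdot\beta$ with $\beta\leq\theta$ are handled by direct limits of the iteration trees produced so far. The analog of Claim \ref{cl:nameselememb} (elementarity of the names under iteration embeddings) then follows by the same uniform-definability argument.

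The main obstacle will be to verify that the two phases interact correctly. After the first $\omega\cdot\theta$ moves, the initial premouse $M$ will have been iterated and generically extended many times using (the images of) the $\delta_k<\lambda$, and we need to ensure that the resulting model still has class $S_\alpha$ above (the image of) $\lambda$. This is precisely why the hypothesis places the $\theta$ Woodins strictly below $\lambda$ and the $S_\alpha$ structure strictly above: the iterations and collapses used in the long-game phase have critical points below $\lambda$ and hence leave the extender sequence above $\lambda$ undisturbed. Once this is granted, the analog of Claim \ref{cl:main} goes through as in Section \ref{sec:simpleclopen}: the third alternative of Neeman's theorem at the bottom of the induction leads to a contradiction via absoluteness of clopen determinacy of length $\omega$, and either of the first two alternatives yields a winning strategy for the corresponding player in the hybrid game, hence in the original $\sigma$-projective game of length $\omega\cdot\theta$.
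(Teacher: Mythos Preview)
Your proposal is correct and follows the paper's strategy closely: reduce to a simple clopen game of length $\omega\cdot(\theta+\omega)$ via the decoding game, then run the inductive construction of Theorem \ref{prop:simpleclopen} for the tail while using the $\theta$ Woodin cardinals below $\lambda$ for the first $\omega\cdot\theta$ moves. The one noteworthy difference is organizational. You propose to define $\dot W^p_{\mathrm{I}}$, $\dot W^p_{\mathrm{II}}$ for \emph{all} partial plays $p$ of length ${<}\omega\cdot\theta$, invoking the single-Woodin Theorem \ref{thm:Neeman} once per $\omega$-block and handling limit stages $\omega\cdot\beta$ by direct limits of the iteration trees built so far. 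The paper instead short-circuits this entire stretch: it defines only $\dot W^\emptyset_{\mathrm{I}}$ and $\dot W^\emptyset_{\mathrm{II}}$ as $\Col(\omega,\delta)$-names for sets of $(\theta+1)$-sequences of reals (where $\delta$ is the least Woodin above $\lambda$), and then applies the \emph{general} long-game version of Neeman's result, \cite[Theorem 2A.2]{Ne04}, in a single step to obtain $\phi^\emptyset_{\mathrm{I}}$ and $\phi^\emptyset_{\mathrm{II}}$ for the entire initial segment of length $\omega\cdot\theta+\omega$. Your unfolded version is exactly how that theorem is proved internally, so the two approaches are equivalent; the paper's packaging simply avoids having to re-argue the limit-stage bookkeeping (wellfoundedness of direct limits, genericity at limits, etc.), which is already handled in \cite{Ne04}. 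Your observation that the iterations and collapses in the long-game phase have critical points below $\lambda$ and therefore preserve the $S_\alpha$ structure above $\lambda$ is precisely the point that makes the two phases mesh.
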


The theorem is a consequence of a more general result akin to Theorem \ref{prop:simpleclopen}, namely, the determinacy of simple $\sigma$-projective games of length $\omega\cdot(\theta+\omega)$, in the following sense:
\begin{definition}\label{DefSimpleGeneral}
  Let $\Gamma$ be a collection of subsets of $\omega^{\omega\cdot\theta + \omega^2}$
  (with each $A \in \Gamma$ identified with a subset of $\omega^{\omega\cdot\theta + \omega\cdot n}$ for some
  $n \in \omega$ as in Definition \ref{DefSimple}). A game of length $\omega\cdot\theta + \omega^2$ is
  $\Gamma$-\emph{simple} if it is obtained as follows:
\begin{enumerate}
\item \label{DefSimpleGeneral1} For every $n \in \omega$, games in $\Gamma$ of fixed length
  $\omega\cdot\theta + \omega \cdot n$ are $\Gamma$-simple.
\item  \label{DefSimpleGeneral2} Let $n\in\omega$ and for each
  $i \in \omega$ let $G_i$ be a $\Gamma$-simple game. Then the game
  $G$ obtained as follows is $\Gamma$-simple: Players I and II take
  turns playing natural numbers for $\omega\cdot\theta + \omega \cdot n$ moves, i.e., $\theta + n$
  rounds in games of length $\omega$. Afterwards, Player I plays some
  $i\in\omega$. Players I and II continue playing according to the
  rules of $G_i$ (keeping the first $\omega\cdot\theta + \omega \cdot n$ natural numbers
  they have already played).
\item \label{DefSimpleGeneral3} Let $n\in\omega$ and for each
  $i \in \omega$ let $G_i$ be a $\Gamma$-simple game. Then the game
  $G$ obtained as follows is $\Gamma$-simple: Players I and II take
  turns playing natural numbers for $\omega\cdot\theta + \omega \cdot n$ moves, i.e., $\theta + n$
  rounds in games of length $\omega$. Afterwards, Player II plays some
  $i\in\omega$. Players I and II continue playing according to the
  rules of $G_i$ (keeping the first $\omega\cdot\theta + \omega \cdot n$ natural numbers
  they have already played).
\end{enumerate}
\end{definition}

The notion of \emph{game rank} in this context is defined as before:
if $G$ is a game of fixed length $\omega\cdot\theta+\omega\cdot n$, then
$\gr(G) = n$. If $G$ is obtained from games $G_0, G_1, \hdots$, and
from an ordinal $\omega \cdot \theta + \omega\cdot n$ as in Definition \ref{DefSimpleGeneral}, we
let
\[ \gr(G) = \sup\{\gr(G_i)+\omega:i\in\omega\} + n. \] 

\begin{theorem}
 Let $\theta$ be a countable ordinal. Suppose that for each
  $\alpha<\omega_1$ and each $y\in\mathbb{R}$ there is some $x\geq_T y$ and an $x$-premouse $M$ of class $S_\alpha$ above
  some $\lambda$ below which there are $\theta$ Woodin cardinals in
  $M$. Then, simple $\sigma$-projective games of length
  $\omega\cdot(\theta+\omega)$ are determined.
\end{theorem}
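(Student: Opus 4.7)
The plan is to combine the proof of Theorem \ref{prop:simpleclopen} with Neeman's techniques for games of length $\omega\cdot\theta$ from \cite{Ne04}. First, by adapting the decoding-game construction of Section \ref{sec:sigmaproj} together with Proposition \ref{PropositionSC2} to this slightly longer setting, I would reduce the problem to proving determinacy of simple clopen games of length $\omega\cdot(\theta+\omega)$ of arbitrary countable game rank. The reduction is straightforward: after $\omega\cdot\theta$ initial rounds, the simple $\sigma$-projective part of length $\omega^2$ is replaced by the decoding game for its $\sigma$-projective payoff, yielding a simple clopen game of length $\omega\cdot(\theta+\omega)$ whose game rank bounds that of the decoding tree.

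For simple clopen games of length $\omega\cdot(\theta+\omega)$, I would define by simultaneous recursion names $\dot W^p_{\mathrm{I}}, \dot W^p_{\mathrm{II}}$ and formulae $\phi^p_{\mathrm{I}}, \phi^p_{\mathrm{II}}$ for partial plays $p$, together with the corresponding good names in collapse extensions. For plays $p$ of length $\omega\cdot\theta + \omega\cdot n$ with $n\in\omega$, i.e., after the preparation phase, the recursion on the game rank proceeds exactly as in the proof of Theorem \ref{prop:simpleclopen}, exploiting the $S_\alpha$-structure of the premouse above $\lambda$, where $\alpha$ is an upper bound on the game rank encountered. For plays $p$ of length $\omega\cdot\beta$ with $\beta\leq\theta$, one instead uses the $\theta$ Woodin cardinals below $\lambda$, applying Theorem \ref{thm:Neeman} at successor values of $\beta$ to the next Woodin cardinal and, for limit values of $\beta$, selecting witness active extenders from the premouse in the style of \cite[Section 2]{Ne04}.

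The main obstacle will be the interface between the two phases at length $\omega\cdot\theta$. One must verify that the names $\dot W^p_{\mathrm{I}}$ associated with the simple part above $\lambda$ are interpreted correctly in the successive collapse extensions generated during the preparation phase (which affect only the initial segment of the premouse below $\lambda$), and that they commute with the iteration embeddings arising from Neeman's theorem. This requires an extended version of Claim \ref{cl:nameselememb} absorbing both generic extensions by collapses of cardinals below $\lambda$ and elementary embeddings arising from iterations there. The key observation making this possible is that being of class $S_\alpha$ above $\lambda$ is preserved by such embeddings and extensions, since they do not touch the extender sequence above $\lambda$.

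Finally, an analog of Claim \ref{cl:main} finishes the proof: if Player I's formula $\phi^{\emptyset}_{\mathrm{I}}[\dot W^{\emptyset}_{\mathrm{I}}]$ holds in the given premouse, a winning strategy for Player I is obtained by concatenating the strategies provided by Theorem \ref{thm:Neeman} at successor stages in both phases, selecting witness active extenders at limit-rank stages above $\omega\cdot\theta$ and at limit stages below $\omega\cdot\theta$, and appealing to clopen determinacy of length $\omega$ at the base case. The symmetric argument produces a winning strategy for Player II from $\phi^{\emptyset}_{\mathrm{II}}[\dot W^{\emptyset}_{\mathrm{II}}]$, and the trichotomy is closed off as in Claim \ref{cl:main}: a play in which neither formula ever holds would eventually reach game rank $1$ in a $\ZFC$-model, contradicting clopen determinacy of length $\omega$.
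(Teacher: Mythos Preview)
Your reduction to simple clopen games and your treatment of the ``upper part'' (partial plays $p$ of length $\geq\omega\cdot\theta$, where the recursion on $\gr(G_p)$ runs exactly as in Theorem~\ref{prop:simpleclopen} using the $S_\alpha$-structure above $\lambda$) match the paper. The difference lies in how the preparation phase of length $\omega\cdot\theta$ is handled. The paper does \emph{not} iterate the single-Woodin Theorem~\ref{thm:Neeman} through the first $\theta$ rounds; instead it skips directly from the names $\dot W^p_{\mathrm I}$, $\dot W^p_{\mathrm{II}}$ for $p$ of length $\omega\cdot\theta$ to the names $\dot W^\emptyset_{\mathrm I}$, $\dot W^\emptyset_{\mathrm{II}}$, defined as $\Col(\omega,\delta)$-names for sets of $(\theta+1)$-sequences of reals (with $\delta$ the least Woodin above $\lambda$), and then invokes Neeman's Theorem~2A.2 in its full long-game form---using all $\theta$ Woodin cardinals below $\lambda$ together with $\delta$---to obtain a strategy for the entire block of length $\omega\cdot\theta+\omega$ in a single step. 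This avoids any interface claim at length $\omega\cdot\theta$ and any case analysis on $\beta\leq\theta$.

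Your step-by-step route is workable, but be aware that it amounts to reproving a significant piece of \cite[Chapter~2]{Ne04}: at a limit $\beta\leq\theta$ in the preparation phase there is no player choice of subgame, so there is no ``witness active extender'' to select in the sense of the limit-rank clauses of Theorem~\ref{prop:simpleclopen}; what is needed instead is a direct-limit construction along the iteration trees built so far, together with the corresponding verification that the relevant formulae are preserved. The paper's approach is shorter precisely because it treats that machinery as a black box; yours would trade the citation for an explicit (and considerably longer) argument.
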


\begin{proof}[Proof Sketch]
The theorem is proved like Theorem \ref{prop:simpleclopen}: first, by arguing as in Proposition \ref{PropositionSC2}, one sees that is suffices to prove determinacy for simple clopen games of length $\omega\cdot(\theta+\omega)$. Let $G$ be a game of successor rank $\alpha$ definable from a parameter $x$. 
Given $y\geq_T x$ and an active $y$-premouse $M$ of class $S_\alpha$, one defines sets $\dot W_{\mathrm{I}}^p(M)$ and $\dot W_{\mathrm{II}}^p(M)$ and formulae 
$\phi^p_{\mathrm{I}}$ and $\phi^p_{\mathrm{II}}$ 
by induction on $\gr(G_p)$ as in the proof of Theorem \ref{prop:simpleclopen}, provided $\gr(G_p) < \gr(G)$. 

The difference is as follows: in the proof of Theorem \ref{prop:simpleclopen}, $\gr(G) = \gr(G_p)$ occurs when $p = \emptyset$; here, it happens when $p$ is a $\theta$-sequence of reals. Instead of defining $\dot W^p_{\mathrm{I}}$ in this case, we define only $\dot W^\emptyset_{\mathrm{I}}$ and $\dot W^\emptyset_{\mathrm{II}}$. These are names for sets of $\theta+1$-sequences of reals and are defined as in Case 2 in the proof of Theorem \ref{prop:simpleclopen}; namely, 
\[\dot W^\emptyset_{\mathrm{I}} = \big\{(\dot p, q)| q\Vdash^M_{\Col(\omega,\delta)}\phi^{\dot p}_{\mathrm{I}}[\dot W^{\dot p}_{\mathrm{I}}]\big\},\]
where $\dot p$ is a name for a $\theta + 1$-sequence of reals, $\delta$ is the least Woodin cardinal of $M$ above $\lambda$, and $\delta^*$ is the second Woodin cardinal of $M$ above $\lambda$, if it exists, and $\omega$, otherwise.
(Note that $\delta$ exists since, by assumption, $\alpha$ is a successor ordinal). $\dot W^\emptyset_{\mathrm{II}}$ is defined similarly. 
Once these names have been defined, one applies Theorem 2A.2 of \cite{Ne04} (the
general version of Theorem \ref{thm:Neeman}) so that (using the fact that $M$ has $\theta$ Woodin cardinals below $\lambda$) one obtains formulae $\phi_{\mathrm{I}}$ and
  $\phi_{\mathrm{II}}$ with parameters $\dot W^\emptyset_{\mathrm{I}}$ and $\dot W^\emptyset_{\mathrm{II}}$, such that one of the following holds:
\begin{enumerate}
\item If $M\models\phi_{\mathrm{I}}[\dot W^\emptyset_{\mathrm{I}}]$, there is a strategy
  $\sigma$ for Player I in a game of length $\omega\cdot\theta + \omega$ such that
  whenever $\vec x$ is a play by $\sigma$, there is a non-dropping iterate $N$ of $M$ with an embedding $j$
  and an $N$-generic $g\subset\Col(\omega,j(\delta))$ such that
  $\vec x\in N[g]$ and $\vec x \in j(\dot W^\emptyset_{\mathrm{I}})[g]$.
\item If $M\models\phi_{\mathrm{II}}[\dot W^\emptyset_{\mathrm{II}}]$, there is a strategy
  $\tau$ for Player II in a game of length $\theta\cdot\omega+\omega$ such that whenever
  $\vec x$ is a play by $\tau$, there is a non-dropping iterate $N$ of $M$ with an embedding $j$ and an
  $N$-generic $g\subset\Col(\omega,j(\delta))$ such that $\vec x\in N[g]$ and
  $\vec x \in j(\dot W^\emptyset_{\mathrm{II}})[g]$.
\item Otherwise, there is an $M$-generic $g\subset\Col(\omega,\delta)$
  and an $\vec x\in M[g]$ such that $\vec x\not\in \dot W^\emptyset_{\mathrm{I}}[g]$ and
  $\vec x\not\in \dot W^\emptyset_{\mathrm{II}}[g]$.
\end{enumerate}

An argument as in the proof of Theorem \ref{prop:simpleclopen} yields the following claim:
\begin{claim}
  Let $M$ be an $x$-premouse and $\lambda \in M$ be an ordinal such that $M$ has $\theta$ Woodin cardinals below $\lambda$. Suppose that $M$ is of class $S_{\alpha}$ above $\lambda$ and no proper initial segment of $M$ is of class $S_{\alpha}$ above $\lambda$, where $\alpha$ is a successor ordinal. Let $\delta$ denote the least Woodin cardinal of $M$ above $\lambda$. Then
  \begin{enumerate}
  \item  If
    $M \models \phi^\emptyset_{\mathrm{I}}[\dot
    W_{\mathrm{I}}^\emptyset]$, then Player I has a winning strategy
    in $G$.
  \item  If
    $M \models \phi^\emptyset_{\mathrm{II}}[\dot
    W_{\mathrm{II}}^\emptyset]$, then Player II has a winning strategy
    in $G$.
  \item 
    $M\models\phi^\emptyset_{\mathrm{I}}[\dot
    W_{\mathrm{I}}^\emptyset] \vee
    \phi^\emptyset_{\mathrm{II}}[ \dot
    W_{\mathrm{II}}^\emptyset]$.
  \end{enumerate}
\end{claim}
The theorem is now immediate from the claim.
\end{proof}

It seems very likely that the hypotheses of Theorem \ref{TheoremLongSPD} are optimal. However, the proof in \cite{Ag18} does not seem to adapt easily to show this.

\subsection{$\sigma$-algebras} 
The proof of Theorem \ref{prop:simpleclopen} adapts to prove the determinacy of various $\sigma$-algebras. As an example, we
consider the smallest $\sigma$-algebra containing all projective sets. We show that a sufficient condition for its determinacy is the existence of $x$-premice of class $S_{\omega+1}$ for every $x\in\mathbb{R}$. 

\begin{theorem}
Suppose that for each $x\in\omega^\omega$ there is an $x$-premouse of class $S_{\omega+1}$. Then, every set in the smallest $\sigma$-algebra on $\omega^\omega$ containing the projective sets is determined.
\end{theorem}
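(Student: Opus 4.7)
The plan is to adapt the proof of Theorem \ref{prop:simpleclopen}, treating projective sets as the atomic sets in the way that clopen sets were atomic there. First I would introduce a coding analogous to that of Section \ref{sec:sigmaproj}: a code $[A]$ for a set $A$ in the $\sigma$-algebra is either a projective formula with parameters (atom), a sequence $\langle [A_0],[A_1],\ldots\rangle$ representing a countable union $A=\bigcup_i A_i$, or $\langle 1,[B]\rangle$ representing a complement $A=\omega^\omega\setminus B$. Using de Morgan's laws we may push all complements to the atomic level, and since the complement of a projective set is projective, we may assume codes have no complement markers at all. Given such a code, the decoding game is defined exactly as in Definition \ref{def:decgame}: the players alternate $\omega$ moves producing a real $x$, then navigate the code tree (with the relevant player choosing an index at each countable-union node) until a projective leaf is reached, at which point Player I wins iff $x$ lies in the projective set at the leaf. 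Since the tree is well-founded and $\omega$-branching, each play completes the navigation in finitely many natural-number moves.

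Next I would adapt the proof of Theorem \ref{prop:simpleclopen} to this decoding game. Working inside an $r$-premouse $M$ of class $S_{\omega+1}$, where $r$ codes $[A]$, and with $\delta$ the least Woodin cardinal of $M$, one defines $\dot W^p_{\mathrm{I}}(M)$, $\dot W^p_{\mathrm{II}}(M)$, and formulas $\phi^p_{\mathrm{I}}$, $\phi^p_{\mathrm{II}}$ by induction on the subtree of $[A]$ below $p$. The essential modification is in the base case: when $p$ reaches a projective leaf, $\dot W^p_{\mathrm{I}}$ is defined as the $\Col(\omega,\delta)$-name for the set of reals $y$ such that Player I has a winning strategy in the projective game at the leaf. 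This is available because the $S_\omega$-part of $M$ contains $M_n^\sharp$ for every $n$, so projective determinacy holds in generic extensions of $M$. At countable-union nodes, one applies Neeman's theorem (Theorem \ref{thm:Neeman}) to obtain formulas $\phi^p_{\mathrm{I}}$, $\phi^p_{\mathrm{II}}$ from the names $\dot W^p_{\mathrm{I}}$, $\dot W^p_{\mathrm{II}}$, just as in the successor case of the proof of Theorem \ref{prop:simpleclopen}. The analog of Claim \ref{cl:nameselememb} — that the names behave well under elementary embeddings — follows from the uniform definability of projective-determinacy strategies across $M_n^\sharp$-premice.

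The main obstacle is that the decoding tree for an arbitrary element of the $\sigma$-algebra has countable but unbounded rank, seemingly requiring arbitrarily many Woodin cardinals beyond the $\omega$ used for projective determinacy. This is handled by observing that the entire recursion defining $\dot W^p(M)$ is internal to $M$: the code $[A]$ is a real in $M$, the induction is well-founded of countable length, and the $M_n^\sharp$-hierarchy inside $M$ provides projective-determinacy strategies uniformly throughout. No additional Woodin cardinals are consumed by the internal recursion; the single extra Woodin $\delta$ provided by the $S_{\omega+1}$-assumption is used only once, at the outermost level, to apply Neeman's theorem and extract an actual winning strategy for whichever player the formula $\phi^\emptyset_{\mathrm{I}}$ or $\phi^\emptyset_{\mathrm{II}}$ selects. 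The analog of Claim \ref{cl:main} then completes the proof, as in Theorem \ref{prop:simpleclopen}.
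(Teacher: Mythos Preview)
Your overall plan and your final paragraph capture the right intuition: the single extra Woodin cardinal provided by $S_{\omega+1}$ should be spent exactly once, at the outermost application of Neeman's theorem (for the initial $\omega$ moves producing the real), while the navigation through the union/intersection nodes of the code should be ``free''. However, your second paragraph contradicts this. You write that at countable-union nodes one applies Neeman's theorem ``just as in the successor case of the proof of Theorem \ref{prop:simpleclopen}''. The successor case of that proof consumes a Woodin cardinal---it forces with $\Col(\omega,\delta)$ and produces a name for a set of \emph{reals}---so invoking it at each union node would require a Woodin per node, which is precisely what you then assert does not happen. Moreover, a union node is traversed by a single natural-number move, not by a round of length $\omega$, so Neeman's theorem does not even apply there.

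The correct treatment of the navigation nodes is the one the paper uses (and which your last paragraph gestures toward without making precise): they are handled like the \emph{limit} case of Theorem \ref{prop:simpleclopen}, not the successor case. For $p$ at a navigation node and $M$ of class $S_\omega$, one sets $\dot W^p_{\mathrm{I}}(M)$ to be the set of $k\in\omega$ such that $M\models\phi^{p^\frown k}_{\mathrm{I}}[\dot W^{p^\frown k}_{\mathrm{I}}(M)]$---no forcing, no new Woodin, and crucially the \emph{same} model $M$ throughout---with $\phi^p_{\mathrm{I}}$ asserting that this set is nonempty (or equals $\omega$, depending on whose turn it is). The induction runs over a separate ``subrank'' (the Borel-over-projective rank of the current node), not over the game rank. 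The paper also handles the projective leaves differently from your sketch: rather than invoking projective determinacy directly, it continues decoding the projective code, landing in the $\gr(G_p)\leq\omega$ regime handled verbatim by Theorem \ref{prop:simpleclopen} via drops to $S_n$ initial segments. Your alternative of terminating at the leaf with an appeal to projective determinacy in generic extensions could likely be made to work, but note that your description of $\dot W^p_{\mathrm{I}}$ at the leaf as a name for a set of reals $y$ does not typecheck once $p$ already records the initial real played in the first $\omega$ moves.
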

\begin{proof}[Proof Sketch]
Let $A$ belong to the $\sigma$-algebra in the statement. Let
\[\{\SIGMA^0_\alpha(\PI^1_\omega):\alpha<\omega_1\}\]
denote the Borel hierarchy built starting from sets which are countable intersections of projective sets, i.e., $\SIGMA^0_0(\PI^1_\omega) = \PI^1_\omega$ consists of all countable intersections of projective sets and $\SIGMA^0_\alpha(\PI^1_\omega)$ consists of all countable unions of sets each of which is the complement of a set in $\SIGMA^0_\beta(\PI^1_\omega)$ for some $\beta<\alpha$. Standard arguments show that these pointclasses constitute a hierarchy of sets in the smallest $\sigma$-algebra containing the projective sets. It follows that there is $\alpha<\omega_1$ and $x\in\omega^\omega$ such that $A \in \Sigma^0_\alpha(\Pi^1_\omega)(x)$, i.e., such that $A$ belongs to $\SIGMA^0_\alpha(\PI^1_\omega)$ and that $A$ has a $\sigma$-projective code (in the sense of Section \ref{sec:sigmaproj}) which is recursive in $x$.

Let $[A]$ be a $\sigma$-projective code for $A$ which is recursive in $x$ and in which no complements appear. To show that $A$ is determined, it suffices to show that the decoding game for $[A]$ is determined. Let us denote this game by $G$. It is a simple clopen game; it is not quite of rank $\omega+1$, so determinacy does not follow immediately from Theorem \ref{prop:simpleclopen}, but it follows from the proof:

Given a partial play $p$ of the game and a model $N$, we define sets $\dot W_{\mathrm{I}}^p(N)$ and $\dot W_{\mathrm{II}}^p(N)$, formulae $\phi_{\mathrm{I}}[\dot W_{\mathrm{I}}^p(N)]$ and $\phi_{\mathrm{II}}[\dot W_{\mathrm{II}}^p(N)]$, and names $\dot B^{\dot p,I}_n$ and $\dot B^{\dot p,II}_n$ as in the proof of Theorem \ref{prop:simpleclopen}. The cases where $\gr(G_p)\leq\omega$ are exactly as in the proof of Theorem \ref{prop:simpleclopen}. The remaining cases are very slightly different---for this, let us define the notion of \emph{subrank}. Recall that $G$ has the following rules:
\begin{enumerate}
\item Players I and II begin by alternating $\omega$ many rounds to play a real number $y\in\omega^\omega$.
\item Afterwards, they alternate a finite amount of turns as follows: letting $[A_0] = [A]$ and supposing $[A_n]$ has been defined, $[A_n]$ is a $\sigma$-projective code for a union or an intersection of sets, say $\{B_i:i\in\omega\}$, with each $B_i$ of smaller (Borel) rank. By the rules of the decoding game, one of the players needs to play a natural number $i$, thus selecting a $\sigma$-projective code $[B_i]$ for $B_i$; we let $[A_{n+1}] = [B_i]$.
\item Eventually, a stage $n^*$ is reached in which $[A_{n^*}]$ is a code for a projective set; at this point the players continue with the rules of the decoding game for $[A_{n^*}]$.
\end{enumerate}
Given a play $p$ of $G$ in which a player needs to play a natural number $i$ and $[A_n]$ has been defined as above, we say that the \emph{game subrank} of $G_p$ is the least $\gamma$ such that $A_n \in \Sigma^0_\gamma(\Pi^1_\omega)(x)$ or $A_n \in \Pi^0_\gamma(\Sigma^1_\omega)(x)$.

We now continue the definition of the sets $\dot W_{\mathrm{I}}^p(N)$ and $\dot W_{\mathrm{II}}^p(N)$, formulae $\phi_{\mathrm{I}}[\dot W_{\mathrm{I}}^p(N)]$ and $\phi_{\mathrm{II}}[\dot W_{\mathrm{II}}^p(N)]$, and names $\dot B^{\dot p,I}_n$ and $\dot B^{\dot p,II}_n$. Assume $p$ is such that $\omega<\gr(G_p)$, so that the sets have not been defined already; we proceed by induction on the subrank of $G_p$:
\begin{case}
  The subrank of $G_p$ is defined and nonzero and the rules of $G$ dictate
  that, after $p$, it is Player I's turn.
\end{case}

Let $y \geq_T x$ and let $M$ be any $y$-premouse which is a model of
$\ZFC$ and of class $S_\omega$. Let $p \in M$. Then let
$\dot W^p_{\mathrm{I}} = \dot W^p_{\mathrm{I}}(M)$ be the set
of all $k\in\omega$ such that 
\[M \models \phi^{p^\frown k}_{\mathrm{I}}[\dot
  W_{\mathrm{I}}^{p^\frown k}(M)].\] 
  This makes sense because for all
$k\in\omega$, the subrank of $G_{p^\frown k}$ is smaller than the subrank of $G_p$, so the set
$\dot W_{\mathrm{I}}^{p^\frown k}(M)$ has been defined.  Moreover, $\dot W^p_{\mathrm{I}}$ belongs to $M$,
since $[A]$ is recursive in $x$ and the formulae
$\phi^{p^\frown k}_I(\dot W_{\mathrm{I}}^{p^\frown
  k}[M])$, as well as the sets
$\dot W_{\mathrm{I}}^{p^\frown k}(M)$, are
definable uniformly in $p$, as can be shown inductively by following this
construction and the proof of Theorem \ref{thm:Neeman} (cf. the proof
of \cite[Theorem 1E.1]{Ne04}).

We let $\phi^p_{\mathrm{I}}[\dot W_{\mathrm{I}}^p(M)]$
be the formula asserting that $\dot W^p_{\mathrm{I}}$ is
non-empty. Similarly, let
$\dot W^p_{\mathrm{II}} = \dot W^p_{\mathrm{II}}(M)$ be the set
of all $k\in\omega$ such that
\[M \models \phi^{p^\frown k}_{\mathrm{II}}[\dot
  W_{\mathrm{II}}^{p^\frown k}(M)].\]
   We let
$\phi^p_{\mathrm{II}}[\dot W_{\mathrm{II}}^p(M)]$ be the
formula asserting that $\dot W^p_{\mathrm{II}}$ is equal to
$\omega$. As before, the set $\dot W^p_{\mathrm{II}}$ belongs to $M$.

The good $\mathbb{P}_n$-names $\dot B^{\dot p,I}_n$ and $\dot B^{\dot p,II}_n$ are defined as in the other cases.

\begin{case}
  The subrank of $G_p$ is defined and nonzero and the rules of $G$ dictate
  that, after $p$, it is Player II's turn.
\end{case}
This case is similar to the preceding one.

\begin{case}
  $p$ is the empty play.
\end{case}
Let $M$ be any $x$-premouse which is a model of
$\ZFC$ and of class $S_{\omega+1}$ and let $\delta$ be the smallest Woodin cardinal of $M$.
In this case, $\dot W^\emptyset_{\mathrm{I}}$ is the canonical $\Col(\omega,\delta)$-name for all reals $p$ such that $\phi^{p}_I(\dot W_{\mathrm{I}}^{p}[\dot B])$ holds, where $\dot B \in M$ is a good $\Col(\omega,\delta)$-name with respect to
  $\dot p$, so that whenever $G$ is
  $\Col(\omega, \delta)$-generic over $M$,
  $\dot B[G] = \dot W^{p}_{\mathrm{I}}(M[G])$,
  where $p = \dot p[G]$.

The name $\dot W^\emptyset_{\mathrm{II}}$ is defined analogously. The formulae $\phi^\emptyset_{\mathrm{I}}(\dot W^\emptyset_{\mathrm{I}})$ and $\phi^\emptyset_{\mathrm{II}}(\dot W^\emptyset_{\mathrm{II}})$ are obtained by applying Neeman's theorem to $\dot W^\emptyset_{\mathrm{I}}$ and $\dot W^\emptyset_{\mathrm{II}}$. \\

This completes the definition of the names and formulae. After this, an argument as in the proof of Theorem \ref{prop:simpleclopen} yields the following claim:
\begin{claim}
  Let $M$ be an $x$-premouse which is of class $S_{\omega+1}$ but has no proper initial segment of class $S_{\omega+1}$. Let $\delta$ denote the least Woodin cardinal
  in $M$. Then
  \begin{enumerate}
  \item  If
    $M \models \phi^\emptyset_{\mathrm{I}}[\dot
    W_{\mathrm{I}}^\emptyset]$, then Player I has a winning strategy
    in $G$.
  \item  If
    $M \models \phi^\emptyset_{\mathrm{II}}[\dot
    W_{\mathrm{II}}^\emptyset]$, then Player II has a winning strategy
    in $G$.
  \item 
    $M\models\phi^\emptyset_{\mathrm{I}}[\dot
    W_{\mathrm{I}}^\emptyset] \vee
    \phi^\emptyset_{\mathrm{II}}[\dot
    W_{\mathrm{II}}^\emptyset]$.
  \end{enumerate}
\end{claim}
The theorem is now immediate from the claim.
\end{proof}

The following remains an interesting open problem:
\begin{question}
What is the consistency strength of determinacy for sets in the smallest $\sigma$-algebra containing the projective sets?
\end{question}

\section*{Acknowledgements}

The first-listed author was partially suported by FWO grant number 3E017319 and by FWF grant numbers I4427 and I4513-N.
The second-listed author, formerly known as Sandra Uhlenbrock, was partially supported by FWF grant number P
28157 and in addition gratefully acknowledges funding from L'OR\'{E}AL Austria, in collaboration with the Austrian UNESCO Commission and in cooperation with the Austrian Academy of Sciences - Fellowship \emph{Determinacy and Large Cardinals}. 
This project has received funding from the European Union's Horizon 2020 research and innovation programme under the Marie Sk{\l}odowska-Curie grant agreement No 794020 of the third-listed author (Project \emph{IMIC: Inner models and infinite computations}). The third-listed author was partially supported by FWF grant number I4039.

\bibliographystyle{alpha}
\bibliography{References}

\begin{thebibliography}{MSW20}

\bibitem[Agu]{Ag18}
J.~P. Aguilera.
\newblock $\sigma$-{P}rojective {D}eterminacy.
\newblock Submitted. Available at
  \url{https://www.dropbox.com/s/g853hj5ern1kkyb/SPReversal.pdf?dl=0}.

\bibitem[Har78]{Ha78}
L.~Harrington.
\newblock {Analytic Determinacy and $0^\#$}.
\newblock {\em Journal of Symbolic Logic}, 43:685--693, 1978.

\bibitem[KW10]{KW10}
P.~Koellner and W.~H. Woodin.
\newblock {Large Cardinals from Determinacy}.
\newblock In M.~Foreman and A.~Kanamori, editors, {\em Handbook of Set Theory}.
  Springer, 2010.

\bibitem[Mos09]{Mo09}
Y.~N. Moschovakis.
\newblock {\em Descriptive set theory, second edition}, volume 155 of {\em
  Mathematical Surveys and Monographs}.
\newblock AMS, 2009.

\bibitem[MS94]{MS94}
W.~J. Mitchell and J.~R. Steel.
\newblock {\em Fine structure and iteration trees}.
\newblock Lecture notes in logic. Springer-Verlag, Berlin, New York, 1994.

\bibitem[MSW20]{MSW}
S.~M\"uller, R.~Schindler, and W.H. Woodin.
\newblock Mice with finitely many woodin cardinals from optimal determinacy
  hypotheses.
\newblock {\em Journal of Mathematical Logic}, 20, 2020.

\bibitem[Myc64]{My64}
Jan Mycielski.
\newblock On the axiom of determinateness.
\newblock {\em Fundamenta Mathematicae}, 53(2):205--224, 1964.

\bibitem[Nee95]{Ne95}
I.~Neeman.
\newblock {Optimal Proofs of Determinacy}.
\newblock {\em Bulletin of Symbolic Logic}, 1(3):327--339, 09 1995.

\bibitem[Nee02]{Ne02}
I.~Neeman.
\newblock {Optimal Proofs of Determinacy II}.
\newblock {\em Journal of Mathematical Logic}, 2(2):227--258, 11 2002.

\bibitem[Nee04]{Ne04}
I.~Neeman.
\newblock {\em The Determinacy of Long Games}.
\newblock De Gruyter series in logic and its applications. Walter de Gruyter,
  2004.

\bibitem[Nee05]{Ne05}
Itay Neeman.
\newblock {\em An introduction to proofs of determinacy of long games}, pages
  43–--86.
\newblock Lecture Notes in Logic. Cambridge University Press, 2005.

\bibitem[Nee07]{Ne07}
Itay Neeman.
\newblock Games of length $\omega_1$.
\newblock {\em Journal of Mathematical Logic}, 07(01):83--124, 2007.

\bibitem[SSZ02]{SchStZe02}
R.~Schindler, J.~R. Steel, and M.~Zeman.
\newblock {Deconstructing inner model theory}.
\newblock {\em Journal of Symbolic Logic}, 67:721--736, 2002.

\bibitem[Ste10]{St10}
J.~R. Steel.
\newblock {An Outline of Inner Model Theory}.
\newblock In M.~Foreman and A.~Kanamori, editors, {\em Handbook of Set Theory}.
  Springer, 2010.

\bibitem[Tra13]{Tr13}
N.~D. Trang.
\newblock {\em Generalized Solovay Measures, the HOD analysis, and the Core
  Model Induction}.
\newblock 2013.
\newblock {PhD Thesis. UC Berkeley}.

\bibitem[Uhl16]{Uh16}
Sandra Uhlenbrock, now~M\"uller.
\newblock {\em {Pure and Hybrid Mice with Finitely Many Woodin Cardinals from
  Levels of Determinacy}}.
\newblock PhD thesis, WWU M\"unster, 2016.

\end{thebibliography}
  
\end{document}